\newtheorem{theorem}{Theorem}[section]
\newtheorem{corollary}[theorem]{Corollary}
\newtheorem{lemma}[theorem]{Lemma}
\newtheorem{proposition}[theorem]{Proposition}
\newcommand{\Bound}{\partial\Omega}
\newcommand{\D}{\text{D}}
\newcommand{\quat}{\mathbb{H}}
\title{Uniqueness of the inverse conductivity problem once-differentiable complex conductivities in three dimensions}
\author{Ivan Pombo}
\date{June 2022}
\begin{document}

\maketitle
\abstract{We prove uniqueness of the inverse conductivity problem in three dimensions for complex conductivities in $W^{1,\infty}$.

We apply quaternionic analysis to transform the inverse problem into an inverse Dirac scattering problem, as established in two dimensions by Brown and Uhlmann. 

This is a novel methodology that allows to extend the uniqueness result from once-differentiable real conductivities to complex ones.}

\section{Introduction}

Let $\gamma \in W^{1,\infty}(\Omega)$ be the complex-valued conductivity defined in a bounded Lipschitz domain $\Omega\subset \mathbb{R}^3$ and given by $\gamma = \sigma + i\omega \epsilon$ where $\sigma$ is the electrical conductivity and satisfies $\sigma\geq c>0$, $\epsilon$ is the electrical permittivity and $\omega$ is the current frequency.

Given a boundary value $f\in H^{1/2}(\Bound)$ we can determine the respective electrical potential $u\in H^1(\Omega)$ by uniquely solving
\begin{align}\label{Conductivity_eq1}
    \begin{cases}
        \nabla\cdot(\gamma\nabla u)=0, \text{ in } \Omega,\\
        \left. u \right|_{\Bound}  = f.
    \end{cases}
\end{align}

This is the so-called conductivity equation which describes the behavior of the electrical potential, $u$, in a conductive body when a voltage potential is applied on the boundary, $f$.

In 1980, A.P. Calderón, \cite{Calderon} introduced the problem of whether one can recover uniquely a conductivity $\sigma \in L^{\infty}(\Omega)$ from the boundary measurements, i.e., 
from the Dirichlet-to-Neumann map
\begin{align}\label{Dirichlet_to_Neumann_map}
    \Lambda_{\sigma}:\, H^{1/2}(\Bound) &\rightarrow H^{-1/2}(\Bound),\\
                   \nonumber    f &\mapsto \sigma \left.\frac{\partial u}{\partial\nu}\right|_{\Bound}
\end{align}
which connects the voltage and electrical current at the boundary. The normal derivative exists as an element of $H^{-1/2}(\Bound)$ by
\begin{align}\label{Dirichlet_to_Neumann_map_weak}
    \langle \Lambda_\sigma f, g\rangle = \int_{\Omega} \sigma\nabla u\cdot\nabla v\,dx
\end{align}
where $v\in H^1(\Omega)$ with $\left.v\right|_{\Bound} = g$ and $u$ solves (\ref{Conductivity_eq1}).

In the same paper, Calderón was able to prove that the linearized problem at constant real conductivities has a unique solution. Thereafter, many authors extended is work into global uniqueness results. Sylvester and Uhlmann \cite{Sylvester_Uhlmann} used ideas of scattering theory, namely the exponential growing solutions of Faddeev \cite{Faddeev} to obtain global uniqueness in dimensions $n\geq 3$ for smooth conductivities. Using this foundations the uniqueness for lesser regular conductivities was further generalized for dimensions $n\geq 3$ in the works of (\cite{Alessandrini}, \cite{Brown}, \cite{Brown_Torres},  \cite{Caro_Rodgers}, \cite{Chanillo}, \cite{Haberman_Tataru},  \cite{Nachman3D}, \cite{Nach_Syl_Uhl}, \cite{PPU}). Currently, the best know result is due to Haberman \cite{Haberman} for conductivities $\gamma\in W^{1, 3}(\Omega)$. The reconstruction procedure for $n\geq 3$ was obtained in both \cite{Nachman3D} and \cite{Novikov} independently.  As far as we are aware, there seems to be no literature concerning reconstruction for conductivities with less than two derivatives. 

In two dimensions the problem seems to be of a different nature and tools of complex analysis were used to establish uniqueness. Nachman \cite{Nachman2D} obtained uniqueness and a reconstruction method for conductivities with two derivatives. The uniqueness result was soon extend for once-differentiable conductivities in \cite{Brown_Uhlmann} and a corresponding reconstruction method was obtained in \cite{Knudsen_Tamasan}. In 2006, Astala and Päivärinta \cite{Astala_Paivarinta} gave a positive answer Calderón's problem for $\sigma\in L^{\infty}(\Omega),\, \sigma\geq c>0$, by providing the uniqueness proof through the reconstruction process.

All of this definitions can be extended to the complex-conductivity case with the assumption $\text{Re } \gamma\geq c>0$, in particular, we can define the Dirichlet-to-Neumann as above $\Lambda_{\gamma}$.

In this scenario, the first works was done in two-dimensions by Francini \cite{Francini}, by extending the work of Brown and Uhlmann \cite{Brown_Uhlmann} in two-dimensions proving uniqueness for small frequencies $\omega$ and $\gamma \in W^{2,\infty}$. Afterwards, Bukgheim influential paper \cite{Bukgheim} proved the general result in two-dimensions for complex-conductivities in $W^{2,\infty}$. He reduced the (\ref{Conductivity_eq1}) to a Schrödinger equation and shows uniqueness through the stationary phase method (based on is work many extensions followed in two-dimensions \cite{Astala_Faraco_Rodgers}, \cite{Imanuvilov_Yamamoto}, \cite{Novikov_Santacesaria}). Recently, by mixing techniques of \cite{Brown_Uhlmann} and \cite{Bukgheim}, Lakshtanov, Tejero and Vainberg obtained \cite{Lakshtanov_Tejero_Vainberg} uniqueness for Lipschitz complex-conductivities in $\mathbb{R}^2$. In \cite{Pombo}, the author followed up their work to show that it is possible to reconstruct complex-conductivity with a jump at least in a certain set of points.

In three dimensions, the uniqueness results presented in \cite{Sylvester_Uhlmann} and \cite{Nach_Syl_Uhl} hold for twice-differentiable complex-conductivities in $W^{2,\infty}$, but there was no reconstruction process presented. Nachman's reconstruction method in three dimensions \cite{Nachman3D} was used in \cite{Hamilton_3D} to reconstruct complex conductivities from boundary measurements. Even though the Nachman's proof was presented only for real conductivities, the paper \cite{Pombo_3D} structures the proof in order to show the result holds for complex-conductivities. As far as we aware, the works with lower regularity require real-conductivities. 

In this paper our interest resides in Calderón's problem for once-differentiable complex-conductivities in three-dimensions. The aim is to prove the following theorem:

\begin{theorem}
Let $\Omega\subset\mathbb{R}^3$ a bounded Lipschitz domain, $\gamma_i\in W^{1,\infty}(\Omega), \, i=1,\, 2$ be two complex-valued conductivities with $\textnormal{Re }\gamma_i\geq c>0.$ 
$$\text{If } \Lambda_{\gamma_1} = \Lambda_{\gamma_2}, \text{ then } \gamma_1=\gamma_2.$$
\end{theorem}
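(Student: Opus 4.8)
The plan is to follow the Brown--Uhlmann strategy in its quaternionic reformulation, exactly as the abstract advertises. The first step is to reduce the second-order conductivity equation $\nabla\cdot(\gamma\nabla u)=0$ to a first-order Dirac-type system. Writing $\gamma^{1/2}$ (well defined since $\operatorname{Re}\gamma\geq c>0$) and substituting, one obtains for $v=\gamma^{1/2}u$ a Schr\"odinger-type equation $(-\Delta+q)v=0$ with $q=\gamma^{-1/2}\Delta\gamma^{1/2}$; however, since $\gamma$ has only one derivative, $q$ is not a function but a distribution of the form $\nabla\cdot(\text{something in }L^\infty)$. To avoid differentiating $\gamma$ twice, I would instead pass to a $2\times 2$ (equivalently quaternionic) first-order system: set $\vec v=(\gamma^{1/2}u,\ \gamma^{1/2}\nabla u$-type components$)$ so that the system reads $(D+Q)\vec v=0$ where $D$ is a constant-coefficient Dirac operator on $\mathbb{R}^3$ and the potential $Q$ involves only $\nabla\log\gamma^{1/2}=\tfrac12\nabla\log\gamma\in L^\infty$. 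This is the three-dimensional quaternionic analogue of the $\bar\partial$-system used by Brown--Uhlmann in the plane.

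The second step is to construct complex geometric optics (CGO) solutions to $(D+Q)\vec v=0$ of the form $\vec v=e^{i\zeta\cdot x}(\,\eta+\vec r_\zeta\,)$ with $\zeta\in\mathbb{C}^3$, $\zeta\cdot\zeta=0$, $|\zeta|\to\infty$, $\eta$ a suitable constant amplitude in the kernel of the symbol, and remainder $\vec r_\zeta\to 0$. This requires mapping properties of the conjugated operator $e^{-i\zeta\cdot x}D e^{i\zeta\cdot x}=D+i\,\text{(symbol of }\zeta)$ and a Neumann-series / fixed-point argument; since $Q\in L^\infty$ with compact support, the relevant estimate is a bound for $(D_\zeta)^{-1}$ as an operator between weighted $L^2$ spaces (Sylvester--Uhlmann / Haberman--Tataru type estimates), together with boundedness of multiplication by $Q$. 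Here the single derivative on $\gamma$ is exactly what is needed to close the estimates, as in Brown--Uhlmann; one may also need a Haberman--Tataru averaging argument over a sphere of frequencies to handle the mere $L^\infty$ (rather than $W^{1,3}$ or smooth) regularity of the coefficients.

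The third step is the usual boundary-integral identity: from $\Lambda_{\gamma_1}=\Lambda_{\gamma_2}$ one derives, by Alessandrini's identity adapted to the Dirac system, that the CGO solutions for $\gamma_1$ and $\gamma_2$ produce equal boundary Cauchy data, hence
\[
\int_\Omega (Q_1-Q_2)\,\vec v_1\cdot \overline{\vec v_2}\;dx = 0
\]
for all admissible $\zeta$. Inserting the CGO asymptotics, the leading term gives the Fourier transform of $Q_1-Q_2$ (more precisely of $\nabla\log(\gamma_1/\gamma_2)$) evaluated at an arbitrary $k\in\mathbb{R}^3$: writing $\zeta_1,\zeta_2$ so that $\zeta_1-\overline{\zeta_2}=k$ and $|\zeta_j|\to\infty$, the remainder contributions vanish in the limit. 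This forces $\nabla\log\gamma_1=\nabla\log\gamma_2$ as distributions on $\Omega$, so $\log(\gamma_1/\gamma_2)$ is constant; combined with equality of boundary values of $\gamma_i$ (which also follows from $\Lambda_{\gamma_1}=\Lambda_{\gamma_2}$ by a standard boundary determination argument, valid for $W^{1,\infty}$ conductivities), the constant is $1$ and $\gamma_1=\gamma_2$.

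The main obstacle I expect is Step 2: making the CGO construction work for a \emph{complex} potential with only $L^\infty$ regularity. For real once-differentiable conductivities Brown--Uhlmann's Dirac reformulation already handles the loss of one derivative, but the complex case breaks the symmetry/positivity that is sometimes used to control the zeroth-order term, and one must be careful that the quaternionic algebra does not hide a genuine sign condition. Concretely, the delicate points are (i) choosing the amplitude $\eta\in\mathbb{H}$ (or $\mathbb{C}^2$) in the kernel of the principal symbol so that the equation for $\vec r_\zeta$ is of the form $(I+G_\zeta Q)\vec r_\zeta=-G_\zeta Q\eta$ with $\|G_\zeta Q\|<1$ for large $|\zeta|$, uniformly in the direction needed to recover all frequencies $k$; and (ii) controlling the product $\vec v_1\cdot\overline{\vec v_2}$ in the integral identity well enough that the cross terms involving one CGO remainder and one $L^\infty$ potential tend to zero — this is where an $L^p$--$L^{p'}$ or weighted estimate, rather than a naive $L^2$ bound, is likely required, again mirroring the two-dimensional argument of Brown--Uhlmann and its refinements.
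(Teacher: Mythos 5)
Your Step 1 matches the paper: the quaternionic Dirac reduction with potential built from $\tfrac12 D\gamma/\gamma\in L^\infty$ is exactly the starting point, and your closing appeal to boundary determination of $\gamma$ is also how the paper fixes the multiplicative constant. The genuine gap is in your Step 3. You assert that $\Lambda_{\gamma_1}=\Lambda_{\gamma_2}$ yields, ``by Alessandrini's identity adapted to the Dirac system,'' equality of boundary Cauchy data of the Dirac CGO solutions and hence $\int_\Omega (Q_1-Q_2)\,\vec v_1\cdot\overline{\vec v_2}\,dx=0$. But the Dirichlet-to-Neumann map is defined for the second-order conductivity equation, while your CGO solutions solve the first-order system; to transfer information between the two you must show that a given Dirac CGO pair $(\phi_1,\phi_2)$ actually arises as $\gamma^{1/2}(\bar{D}u,Du)^T$ for some $u$ solving $\nabla\cdot(\gamma\nabla u)=0$. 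In two dimensions this is a triviality (Poincar\'e lemma for $\bar\partial$), but in the quaternionic three-dimensional setting it is precisely the obstruction the paper singles out: there is no off-the-shelf Poincar\'e lemma compatible with $D$ and $\bar D$, and the paper has to prove a substitute (Proposition 3.4, which needs the symmetry $\phi_1=\bar\phi_2$, the operators $T$, $F_{\partial\Omega}$, $S_{\partial\Omega}$ and a harmonic-function argument via Lemma 4.1). The paper then does \emph{not} use a bilinear Alessandrini identity at all: it defines scattering data $h(\xi,\zeta)$ by a boundary integral, proves a reconstruction formula for $\hat q_2$ by averaging over annuli $R<|k|<2R$, and shows $\Lambda_{\gamma_1}=\Lambda_{\gamma_2}$ forces $h_1=h_2$ by gluing: convert $\phi^1$ to a conductivity solution $u_1$ via Proposition 3.4, replace it inside $\Omega$ by the $\gamma_2$-solution with the same trace, use equality of the DN maps to see the glued function solves the $\gamma_2$-equation globally, and invoke uniqueness of the integral equation in $S$ to conclude $\phi^1=\phi^2$ on $\partial\Omega$. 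Without an argument of this type (or an honest derivation of your claimed identity), your Step 3 does not go through, and it is exactly the step where the three-dimensional difficulty lives.

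A secondary, fixable issue is in Steps 2--3: for potentials that are merely $L^\infty$, the CGO remainder does not tend to zero pointwise in $\zeta$, so the phrase ``the remainder contributions vanish in the limit'' at a fixed frequency $k$ is not justified by Sylvester--Uhlmann type weighted $L^2$ bounds alone. The paper (following Brown--Uhlmann and Lakshtanov--Tejero--Vainberg) works in $S=L^\infty_x(L^p_k(|k|>R))$, proves the integral operators are contractions there (Lemmas 3.1--3.2), and extracts $\hat q_2(\xi)$ only after averaging in $k$ over annuli (Theorem 3.3). You do mention a Haberman--Tataru style averaging as an option, so this part of your plan can be repaired, but the final Fourier-inversion step must be formulated in the averaged sense rather than pointwise in $\zeta$.
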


Our work basis itself on a transformation of (\ref{Conductivity_eq1}) into a Dirac system of equation in three-dimensions with the help of quaternions. In this scenario, we obtain a potential $q$ that we want to determine from boundary data. The main ideas follow the work of Brown and Uhlmann \cite{Brown_Uhlmann} for real conductivities in two-dimensions and Lakshtanov, Vainberg and Tejero \cite{Lakshtanov_Tejero_Vainberg}, as well as the authors work \cite{Pombo}, for complex-conductivities. 

In this paper, we provide a novel reconstruction of the bounded potential $q$ from the boundary data, but we are yet to be able to establish a relation between this boundary data and the Dirichlet-to-Neumann map. This is essentially to answer Calderón problem for Lipschitz complex conductivities, but the lack of a well-suited Poincaré lemma that fits the quaternion structure does not allow such a simple work as in 2D.

\section{Minimalistic lesson of Quaternions}

We present the basis of the quaternionic framework we will use for our work. Let $\mathbb{R}_{(2)}$ be the real universal Clifford Algebra over $\mathbb{R}^2$. By definition, it is generated as an algebra over $\mathbb{R}$ by the elements $\{e_0, e_1, e_2\}$, where $e_1, e_2$ is a basis of $\mathbb{R}^2$ with $e_ie_j + e_je_i = -2\delta_{ij}$, for $i,\, j=1,\, 2$ and $e_0=1$ is the identity and commutes with the basis elements. This algebra has dimension 4 and is identified with the algebra of the quaternions, $\quat$. As such it holds $e_3=e_1e_2$. 
In the following, we refer to this algebra as the quaternions. An element of the quaternions can be written as:
\begin{align}
x = x_0 + x_1e_1 + x_2e_2 + x_3e_3,
\end{align}
where $x_j,\, j=0, ...,3$ are real. We define the quaternionic conjugate $\bar{x}$ of an element $x$ as
\begin{align}
    \bar{x} = x_0 - x_1e_1 - x_2e_2 - x_3e_3.
\end{align}

Let $x,\, y \in \quat$, we write $xy$ for the resulting quaternionic product. The product $\bar{x}y$ defines a Clifford valued inner product on $\quat$. Further, we have $\overline{xy}=\bar{y}\bar{x}$ and the conjugate of the conjugate of quaternion is that same quaternion. Let $x\in\quat$ then $\text{Sc}\, x = x_0$ denotes the scalar of $x$ and $\text{Vec}\,x = x - \text{Sc}\,x$. The scalar of a Clifford inner product $\text{Sc}(\bar{x}y)$ is the usual inner product in $\mathbb{R}^4$ for $x,\, y$ identified as vectors.

With this inner product $\quat$ is an Hilbert space and the resulting norm is the usual Euclidean norm. 

In order to introduce some of the concepts we also extend the real quaternions to complex quaternions $\mathbb{C}_2 = \mathbb{C}\otimes\quat$. Here, we use the same generators $(e_1, e_2)$ as above, with the same multiplication rules, however, the coefficients of the quaternion can be complex-valued. That is, $\lambda \in \mathbb{C}\otimes\quat$ may be written as
\begin{align}
    \lambda = \lambda_0 + \lambda_1 e_1 + \lambda_2e_2 + \lambda_3e_3, \quad \lambda_j \in \mathbb{C}, \, j=0,...,3
\end{align}
or still as
\begin{align}
    \lambda = x + iy,\quad x,\,y \in \quat.
\end{align}

Due to the complexification we can still take another conjugation, to which we define has Hermitian conjugation and denote it by $\overline{\cdot}^\dag$. Explicitly, for $\lambda \in \mathbb{C}\otimes\quat$ one has
\begin{align}
 \bar{\lambda}^\dag = \lambda_0^c - \lambda_1^c e_1 - \lambda_2^c e_2 - \lambda_3^c e_3, 
\end{align}
where $\cdot^c$ denotes complex conjugation, or
\begin{align}
\bar{\lambda}^\dag = \bar{x} - i\bar{y}.
\end{align}

Similarly, one can introduce an associated inner product and norm in $\mathbb{C}\otimes\quat$ by means of this conjugation:
\begin{align}
    \langle\lambda, \mu\rangle = \text{Sc}\left(\bar{\lambda}^{\dag}\mu\right); \quad |\lambda|_{\mathbb{C}_2} = \sqrt{\text{Sc}\left(\bar{\lambda}^{\dag}\lambda\right)}.
\end{align}

For ease of notation, we also define for $\lambda\in \mathbb{C}_2$ the complex conjugation as
\begin{align}
    \bar{\lambda}^c = \lambda_0^c + \lambda_1^c e_1 + \lambda_2^c e_2 + \lambda_3^c e_3.
\end{align}

Now, we can also introduce Quaternion-valued functions $f: \mathbb{R}^3\rightarrow \mathbb{C}_2$ written as $f = f_0 + f_1e_1 + f_2e_2 + f_3e_3,$ where $f_j:\mathbb{R}^3\rightarrow \mathbb{C}$. 

The Banach spaces $L^p,\, W^{n, p}$ of $C_2$-valued functions are defines by requiring that each component is in such space. On $L^2(\Omega)$ we introduce the $C_2$-valued inner product
\begin{align}
    \langle f, g\rangle = \int_{\Omega} \bar{f}^\dag(x)g(x)\, dx.
\end{align}

Analogously to the Wirtinger derivatives in complex analysis, we have the Cauchy-Riemann operators under $(x_0, \,x_1, \,x_2)$  coordinates of $\mathbb{R}^3$ defined as
\begin{align}
    D = \partial_{0} + e_1\partial_{1}+e_2\partial_2,
\end{align}
where $\partial_j$ is the derivative with respect to the $x_j,\, j=0,1,2$ variable; and
\begin{align}
    \bar{D} = \partial_{0} - e_1\partial_{1}-e_2\partial_2.
\end{align}
The vector part of the Cauchy-Riemann operator is designated as Dirac operator.
It holds that $D\bar{D}=\Delta$ where $\Delta$ is the Laplacian.

We designate any function $f$ fulfilling $D f=0$ as a monogenic function, analogous to the holomorphic functions in complex analysis.

\subsection{A bit of Operator Theory}

Let $\Omega$ be a bounded domain and $f:\Omega\rightarrow \mathbb{C}_2$. All the results in this subsection were taken out from the classical book on quaternionic analysis of Gürlebeck and Sprössig \cite{gurlebeck}

The Cauchy-Riemann operator has a right-inverse in the form
\begin{align}\label{T_operator}
    \left(Tf\right)(x)=-\frac{1}{\omega}\int_{\Omega}\frac{\overline{y-x}}{|y-x|^3}f(y)\, dy, \text{ for } x\in\Omega,
\end{align}
where $E(x,y)= -\frac{1}{\omega}\frac{\overline{y-x}}{|y-x|^3}$ is the generalized Cauchy kernel and $\omega=4\pi$ stands for the surface area of the unit sphere in $\mathbb{R}^3, $ that is, $DTf=f.$ This operator acts from $W^{k,p}(\Omega)$ to $W^{k+1, p}(\Omega)$ with $1<p<\infty$ and $k\in\mathbb{N}_0$.

Furthermore, we introduce the boundary integral operator
for $x\notin \partial\Omega$
\begin{align}\label{boundary_integral}
    \left(F_{\partial\Omega} f\right)(x) = \frac{1}{\omega}\int_{\partial\Omega} \frac{\overline{y-x}}{|y-x|^3}\alpha(y) f(y)\, dS(y),
\end{align}
where $\alpha(y)$ is the outward pointing normal unit vector to $\partial\Omega$ at $y$. We get the well-known Borel-Pompeiu formula
\begin{align*}
    \left(F_{\partial\Omega}f\right)(x) + \left(TDf\right)(x)=f(x) \text{ for } x\in\Omega.
\end{align*}
Obviously, $D F_{\partial\Omega}=0$ holds through this formula it it holds that $F_{\partial\Omega}$ acts from $W^{k-\frac{1}{p},p}(\partial\Omega)$ into $W^{k,p}(\Omega)$, for $k\in\mathbb{N}$ and $1<p<\infty$.

One of the other well-known results we will need for our work is the Plemelj-Sokhotzki formula is obtaining by taking the trace of the boundary integral operator. 

First we introduce an operator over the boundary of $\Omega$.
\begin{proposition}
    If $f\in W^{k, p}(\partial\Omega)$, then there exists the integral
    \begin{align}
\left(S_{\partial\Omega}f\right) = \frac{1}{2\pi} \int_{\partial\Omega} \frac{\overline{y-x}}{|y-x|^3}\alpha(y)f(y)\, dS(y)
    \end{align}
    for all points $x\in\Omega$ in the sense of Cauchy principal value.

    Furthermore, the operator $S_{\partial\Omega}$ is continuous in $W^{k,p}(\partial\Omega)$, for $1<p<\infty, \, k\in\mathbb{N}$.
\end{proposition}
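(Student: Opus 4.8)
\emph{Proof strategy.} The plan is to regard $S_{\partial\Omega}$ as a Clifford-valued Calderón--Zygmund singular integral operator on the surface $\partial\Omega$, and to split the argument into (i) pointwise existence of the principal value, and (ii) the $W^{k,p}$ mapping bound. The substance of the statement is the case $x\in\partial\Omega$: there the kernel $\frac{\overline{y-x}}{|y-x|^3}$ has size $|y-x|^{-2}$, so $\int_{\partial\Omega\cap B(x,\varepsilon)}|y-x|^{-2}\,dS(y)$ diverges logarithmically and the integral is \emph{not} absolutely convergent, only a genuine principal value $\lim_{\varepsilon\to0}\int_{\partial\Omega\setminus B(x,\varepsilon)}$. (For $x$ in the open set $\Omega$ the integrand is bounded and the integral reduces to the absolutely convergent $F_{\partial\Omega}$.)

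For existence of the p.v.\ I would localize in a graph chart around $x\in\partial\Omega$ and subtract the value at the singularity: $\alpha(y)f(y)=\alpha(x)f(x)+\big(\alpha(y)f(y)-\alpha(x)f(x)\big)$. Since $\alpha$ is (H\"older-)continuous and $f\in W^{k,p}(\partial\Omega)$ — hence H\"older continuous on the $2$-surface $\partial\Omega$ when $kp>2$, and otherwise handled by density from smooth $f$ — the difference term produces a kernel of size $|y-x|^{-2+\delta}$, which \emph{is} integrable on $\partial\Omega$. For the remaining constant piece $\alpha(x)f(x)$ one uses the antisymmetry of the Cauchy kernel together with the smoothness of $\partial\Omega$: on the tangent plane at $x$ the integral of $\frac{\overline{y-x}}{|y-x|^3}$ over an annulus $\varepsilon<|y-x|<\varepsilon'$ vanishes by oddness, and the curvature correction is again $O(|y-x|^{-1})$, hence integrable, so the $\varepsilon\to0$ limit exists. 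Equivalently, one can invoke the Borel--Pompeiu formula: $(F_{\partial\Omega}1)(x)=1$ for $x\in\Omega$ and $=0$ for $x$ exterior, and taking the (non-tangential) boundary limits forces the p.v.\ of the kernel against the constant $1$ to equal $\tfrac12$; this is the usual half-jump relation and already exhibits the limit. Combining the two pieces gives $(S_{\partial\Omega}f)(x)$ for every $x\in\partial\Omega$.

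For the continuity statement, the kernel $E(x,y)=-\tfrac1\omega\frac{\overline{y-x}}{|y-x|^3}$ satisfies the standard Calderón--Zygmund bounds $|E(x,y)|\lesssim|x-y|^{-2}$, $|\nabla_{x,y}E(x,y)|\lesssim|x-y|^{-3}$ on $\partial\Omega$, plus the cancellation condition used above; boundedness of $S_{\partial\Omega}$ on $L^p(\partial\Omega)$, $1<p<\infty$, then follows from the Calderón--Zygmund theory of singular integrals on (Lyapunov, or $C^1$) surfaces — the higher-dimensional Cauchy-integral boundedness, due to Calderón--Zygmund and, in the merely Lipschitz case, to Coifman--McIntosh--Meyer. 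To pass to $W^{k,p}$ I would commute a tangential derivative through the operator, $\partial_\tau(S_{\partial\Omega}f)=S_{\partial\Omega}(\partial_\tau f)+[\partial_\tau,S_{\partial\Omega}]f$, where the commutator has a kernel that gains one derivative on $\partial\Omega$ and is therefore again Calderón--Zygmund of the same order on smooth surfaces (weakly singular once the curvature of $\partial\Omega$ is brought in); iterating $k$ times and using the $L^p$ bound at each stage yields the $W^{k,p}\to W^{k,p}$ estimate.

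The main obstacle is exactly this $L^p$ mapping property: boundedness of a Cauchy-type singular integral on the surface is the hard harmonic-analytic input, and on a genuinely Lipschitz boundary one must also make careful sense of the tangential derivatives appearing in the $W^{k,p}$ step, which is why the clean statement is really at home on Lyapunov or smoother $\partial\Omega$. Accordingly I would quote the proposition from Gürlebeck--Sprössig \cite{gurlebeck}, where it is established in that framework, and note that in the sequel it is used only through the resulting Plemelj--Sokhotzki jump relations.
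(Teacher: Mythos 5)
Your proposal is sound, but note that the paper itself offers no proof of this proposition at all: it is stated in the subsection on operator theory with the explicit remark that all results there are taken from Gürlebeck--Sprössig \cite{gurlebeck}, so the paper's ``proof'' is just that citation. Your sketch therefore supplies more than the paper does, and it follows the standard route one would find behind the cited result: splitting off the case $x\in\partial\Omega$ (you correctly read the statement's ``$x\in\Omega$'' as a slip for $x\in\partial\Omega$, since in the interior the integral is not singular and coincides with $F_{\partial\Omega}$), establishing the principal value by the subtraction-plus-oddness argument or the half-jump relation from Borel--Pompeiu, getting $L^p$ boundedness from the Calderón--Zygmund/Coifman--McIntosh--Meyer theorem for Cauchy-type integrals on Lipschitz surfaces, and passing to $W^{k,p}$ by commuting tangential derivatives. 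Two caveats you already half-acknowledge are worth making explicit: on a merely Lipschitz boundary the normal $\alpha$ is only $L^\infty$, so the Hölder-continuity step in your subtraction argument and the commutator step for higher $k$ genuinely require a smoother (Lyapunov or $C^{1,\alpha}$) boundary, which is the setting of \cite{gurlebeck}; and the pointwise existence of the principal value for general $W^{k,p}$ data is obtained a.e.\ via the $L^p$ maximal-truncation bound rather than by the density remark alone. Since you end by quoting the proposition from \cite{gurlebeck}, exactly as the paper does, your treatment is compatible with, and more informative than, the paper's.
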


From this the Plemelj-Sokhotzki formula is given as:
\begin{theorem}
    Let $f\in W^{k,p}(\partial\Omega)$ where by taking the non-tangential limit we have:
    \begin{align*}
        \lim_{\substack{x\rightarrow x_0,  \\ x\in\Omega,\, x_0\in\partial\Omega}} \left(F_{\partial\Omega}f\right)(x) = \frac{1}{2}\left(f(x_0) + \left(S_{\partial\Omega}f\right)(x_0)\right).
    \end{align*}
\end{theorem}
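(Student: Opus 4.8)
The plan is to run the classical ``subtract the boundary value at $x_0$'' argument for the Cauchy transform, which splits the non-tangential limit of $F_{\partial\Omega}f$ into a singular-integral term and a solid-angle term. As a preliminary I would record the geometric identity
\begin{align*}
\frac{1}{\omega}\,\mathrm{p.v.}\int_{\partial\Omega}\frac{\overline{y-x}}{|y-x|^3}\Big|_{x=x_0}\alpha(y)\,dS(y)=\frac12,\qquad\text{equivalently}\qquad (S_{\partial\Omega}1)(x_0)=1,
\end{align*}
valid at $\mathcal{H}^2$-a.e.\ $x_0\in\partial\Omega$. It follows by applying Cauchy's theorem to the Cauchy kernel (which is regular in the relevant variable away from the singularity) on $\Omega\setminus\overline{B_\varepsilon(x_0)}$: the boundary integral over $\partial\Omega\setminus B_\varepsilon(x_0)$ plus that over the spherical cap $\partial B_\varepsilon(x_0)\cap\Omega$ vanishes, and on the cap the substitution $y=x_0+\varepsilon\theta$ reduces the integrand to $\omega^{-1}\,d\theta$, whose integral tends to half the solid angle, i.e.\ $\tfrac12$, as $\varepsilon\to0$ at points where $\partial\Omega$ has a tangent plane. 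This is standard and is contained in \cite{gurlebeck}.

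Now fix such an $x_0$. For $x\in\Omega$ write $\alpha(y)f(y)=\alpha(y)\big(f(y)-f(x_0)\big)+\alpha(y)f(x_0)$ and split the integral defining $F_{\partial\Omega}f$ accordingly:
\begin{align*}
(F_{\partial\Omega}f)(x)=\frac{1}{\omega}\int_{\partial\Omega}\frac{\overline{y-x}}{|y-x|^3}\alpha(y)\big(f(y)-f(x_0)\big)\,dS(y)+\Bigl(\frac{1}{\omega}\int_{\partial\Omega}\frac{\overline{y-x}}{|y-x|^3}\alpha(y)\,dS(y)\Bigr)f(x_0).
\end{align*}
The bracketed factor is $(F_{\partial\Omega}1)(x)$, which by the Borel--Pompeiu formula together with $D1=0$ equals $1$ for all $x\in\Omega$; hence $(F_{\partial\Omega}f)(x)=I(x)+f(x_0)$, where $I(x)$ denotes the first integral. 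It remains to show that, as $x\to x_0$ non-tangentially,
\begin{align*}
I(x)\longrightarrow\frac{1}{\omega}\,\mathrm{p.v.}\int_{\partial\Omega}\frac{\overline{y-x_0}}{|y-x_0|^3}\alpha(y)\big(f(y)-f(x_0)\big)\,dS(y)=\frac12(S_{\partial\Omega}f)(x_0)-\frac12 f(x_0),
\end{align*}
where the last equality uses the preliminary identity together with $\omega^{-1}=\tfrac12\cdot(2\pi)^{-1}$. Adding back $f(x_0)$ then gives $\tfrac12\big(f(x_0)+(S_{\partial\Omega}f)(x_0)\big)$, which is the assertion.

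The analytic core is the convergence of $I(x)$, and this is where I expect the real work to lie. Splitting $\partial\Omega=\Sigma_\delta\cup(\partial\Omega\setminus\Sigma_\delta)$ with $\Sigma_\delta=\partial\Omega\cap B_\delta(x_0)$, the contribution of $\partial\Omega\setminus\Sigma_\delta$ converges to the corresponding limit integral because there the kernel $(x,y)\mapsto\overline{y-x}\,|y-x|^{-3}$ is jointly continuous and bounded. For $\Sigma_\delta$, when $kp>2$ one uses the embedding $W^{k,p}(\partial\Omega)\hookrightarrow C^{0,\mu}(\partial\Omega)$ to get $|f(y)-f(x_0)|\le C|y-x_0|^\mu$; combined with the non-tangentiality condition $|x-x_0|\le C\,\mathrm{dist}(x,\partial\Omega)$ — which forces $|y-x|\ge c|y-x_0|$ uniformly over the cone — and the bound $\mathcal{H}^2(\Sigma_\delta)\lesssim\delta^2$ from the Lipschitz graph structure, the $\Sigma_\delta$-parts of both $I(x)$ and its limit are $O(\delta^\mu)$ uniformly in $x$ (in particular the limit integral converges absolutely there). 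Letting $x\to x_0$ first and then $\delta\to0$ finishes this case; existence of the principal value in general is the stated Proposition. For the remaining $(k,p)$ one first proves the formula for $f$ in the dense class of Hölder functions and then passes to the limit, invoking the continuity of $S_{\partial\Omega}$ on $W^{k,p}(\partial\Omega)$ and the equiboundedness of the operators $f\mapsto(F_{\partial\Omega}f)(x)$ for $x$ in a fixed non-tangential cone at $x_0$. The main obstacle is precisely securing this uniform kernel estimate and equiboundedness along the Lipschitz surface $\partial\Omega$; both are classical, and I would rely on the corresponding estimates in \cite{gurlebeck}.
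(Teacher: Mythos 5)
The paper itself gives no proof of this statement: it is quoted from the book of G\"urlebeck--Spr\"ossig, with the remark that ``the proofs of this results \dots may be found in \cite{gurlebeck}'', so there is no in-paper argument to compare against. Your reconstruction is the classical Plemelj--Sokhotzki jump argument, and it is consistent with the paper's normalizations: since $F_{\partial\Omega}$ carries $1/(4\pi)$ and $S_{\partial\Omega}$ carries $1/(2\pi)$, the principal-value identity $(S_{\partial\Omega}1)(x_0)=1$ at tangent-plane points, together with $F_{\partial\Omega}1\equiv 1$ in $\Omega$ (Borel--Pompeiu with $D1=0$), turns your splitting $(F_{\partial\Omega}f)(x)=I(x)+f(x_0)$ and the limit $I(x)\to\tfrac12 (S_{\partial\Omega}f)(x_0)-\tfrac12 f(x_0)$ into exactly the stated formula; you also correctly keep the constant $f(x_0)$ to the right of the kernel, which matters in the non-commutative setting, and your half-sphere computation $\bar\theta\theta\,d\theta=d\theta$ is the right way to get the factor $\tfrac12$. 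Two caveats on precision rather than correctness. First, on a Lipschitz boundary the conclusion holds only at $\mathcal{H}^2$-a.e.\ $x_0$ (your ``points where $\partial\Omega$ has a tangent plane''), a restriction the paper's statement glosses over. Second, the real analytic content --- existence of the principal value for general $W^{k,p}$ data, $L^p$-boundedness of the Cauchy singular integral on a Lipschitz surface, and the passage from the dense H\"older class to general $f$ --- needs the non-tangential \emph{maximal function} estimate of Coifman--McIntosh--Meyer type rather than mere ``equiboundedness of $f\mapsto(F_{\partial\Omega}f)(x)$ on a fixed cone'': a.e.\ convergence for the limiting class is deduced from the maximal estimate by the usual argument. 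That is precisely the part you (like the paper) delegate to \cite{gurlebeck}; with that reading, your outline is correct and is essentially the proof the cited source gives.
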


One of the corollaries concerns the limit to the boundary acting as a projector. That is,
\begin{corollary}
    The operator $P_{\partial\Omega}$ denoting the projection onto the space of all $\mathbb{H}-$valued functions which may be monogenicaly extended into the domain $\Omega$.\\

    Then, this projection may be represented as $$P_{\partial\Omega} = \frac{1}{2}\left(I+S_{\partial\Omega}\right).$$
\end{corollary}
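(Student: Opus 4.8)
The plan is to verify the two properties that together characterise a projection onto a subspace: writing $\mathcal{M}(\partial\Omega)$ for the space of $\mathbb{C}_2$-valued boundary functions that admit a monogenic extension into $\Omega$, I would show (i) that $P_{\partial\Omega}$ restricts to the identity on $\mathcal{M}(\partial\Omega)$, and (ii) that the range of $P_{\partial\Omega}$ is exactly $\mathcal{M}(\partial\Omega)$. Idempotency $P_{\partial\Omega}^2=P_{\partial\Omega}$ then follows at once, since for arbitrary $f$ the element $P_{\partial\Omega}f$ lies in $\mathcal{M}(\partial\Omega)$ by (ii) and is hence fixed by $P_{\partial\Omega}$ by (i); so $P_{\partial\Omega}$ is a bounded idempotent (boundedness coming from the continuity of $S_{\partial\Omega}$ on $W^{k,p}(\partial\Omega)$) whose range is $\mathcal{M}(\partial\Omega)$, which is the assertion.

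For (i), I would take $g\in W^{k,p}(\partial\Omega)$ to be the trace of some $G\in W^{k,p}(\Omega)$ with $DG=0$. Since then $TDG=0$, the Borel--Pompeiu formula yields $G=F_{\partial\Omega}g$ throughout $\Omega$. Letting $x\to x_0\in\partial\Omega$ non-tangentially, the left-hand side tends to $g(x_0)$ while the right-hand side is evaluated by the Plemelj--Sokhotzki theorem, giving
\[
g(x_0)=\tfrac12\bigl(g(x_0)+(S_{\partial\Omega}g)(x_0)\bigr),
\]
whence $S_{\partial\Omega}g=g$ and therefore $P_{\partial\Omega}g=\tfrac12\bigl(g+S_{\partial\Omega}g\bigr)=g$.

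For (ii), given an arbitrary $f\in W^{k,p}(\partial\Omega)$ I would set $u:=F_{\partial\Omega}f$; by the mapping properties of $F_{\partial\Omega}$ recalled above this $u$ lies in $W^{k,p}(\Omega)$, and $Du=DF_{\partial\Omega}f=0$, so $u$ is monogenic in $\Omega$. Its non-tangential boundary limit equals $\tfrac12\bigl(f+S_{\partial\Omega}f\bigr)=P_{\partial\Omega}f$ by Plemelj--Sokhotzki, so $P_{\partial\Omega}f$ is the trace of a function monogenic in $\Omega$, i.e.\ $P_{\partial\Omega}f\in\mathcal{M}(\partial\Omega)$. Together with (i), every element of $\mathcal{M}(\partial\Omega)$ is its own image and hence lies in the range, so the range is precisely $\mathcal{M}(\partial\Omega)$.

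The step I expect to require the most care is the passage to the boundary in (i): one has to know that the Sobolev trace of $G$ coincides with its non-tangential pointwise boundary value, and that the Plemelj--Sokhotzki limit formula remains valid for boundary data of only $W^{k,p}$ regularity. This is exactly where the continuity statements for $F_{\partial\Omega}$ and $S_{\partial\Omega}$ taken from \cite{gurlebeck}, together with a standard density argument approximating $g$ by smooth boundary data, would enter; granting these, the remainder is formal, and in passing it also records the identity $S_{\partial\Omega}^2=I$ on $W^{k,p}(\partial\Omega)$.
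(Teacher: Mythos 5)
Your argument is correct: Borel--Pompeiu shows that $S_{\partial\Omega}$ fixes traces of monogenic functions, Plemelj--Sokhotzki identifies the non-tangential boundary value of $F_{\partial\Omega}f$ with $\tfrac12\left(I+S_{\partial\Omega}\right)f$, and together these give idempotency (equivalently $S_{\partial\Omega}^2=I$) and identify the range with the space of monogenic traces. The paper itself offers no proof and simply defers to \cite{gurlebeck}, where essentially this same standard argument is carried out, so your proposal matches the intended route; the only point needing the care you already flagged is the identification of Sobolev traces with non-tangential limits at the stated $W^{k,p}$ regularity.
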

The proofs of this results and others to follow in our proofs may be found in \cite{gurlebeck}.
\\

Now we are ready to start constructing our work on the inverse conductivity problem.

\section{Inverse Dirac scattering problem}

Transforming our conductivity equation into another type of equation also changes the inverse problem we are concerned. We transform it into a system of equations based on the Cauchy-Riemann operator $D$ (also called Dirac operator in some contexts) and thus we need to solve the inverse Dirac scattering problem first and only afterwards we care about the inverse conductivity problem.

Let $u$ be a solution to (\ref{Conductivity_eq1}) for some boundary function. We define
\begin{align*}
    \phi = \gamma^{1/2} \left(\,\bar{D}u, D u\right)^T,
\end{align*}
remark that $\gamma^{1/2}$ is well-defined since it is contained in $\mathbb{C}^+$. Then, $\phi$ solves the system
\begin{align}\label{Dirac_system}
    \begin{cases}
    D \phi_1 &= \phi_2q_1, \\
    \bar{D}\phi_2 &= \phi_1 q_2,
    \end{cases} \quad\text{ in } \mathbb{R}^3.
\end{align}
where $q_1 = -\frac{1}{2}\frac{\bar{D}\gamma}{\gamma}$ and $q_2 = -\frac{1}{2}\frac{D\gamma}{\gamma}$. 

This transformation arises as follows:

\begin{align*}
    D\phi_1 &= D\left(\gamma^{1/2}\bar{D}u\right) = D\gamma^{1/2}\bar{D}u + \gamma^{1/2}\Delta u  \\
    &= D\gamma^{1/2}\bar{D}u - \gamma^{-1/2}\nabla\gamma\cdot\nabla u \\
    &= D\gamma^{1/2}\bar{D}u - \frac{1}{2}\gamma^{-1/2}\left(D\gamma\bar{D}u+\D u\bar{D}\gamma\right) \\&=-\frac{1}{2}\left(\gamma^{1/2}D u\right)\frac{\bar{D}\gamma}{\gamma} = \phi_2 q_1
\end{align*}

Carefully, we can extend our potential to the outside by setting $\gamma\equiv 1$ outside of $\Omega$, which lead us to treat the study the equation in $\mathbb{R}^3$.

\subsection{Exponentially Growing Solutions}

We devise new exponentially growing solutions from the classical ones used in three dimensions. In most literature works, the exponential behavior is defined through the function $e^{x\cdot\zeta},$ with $\zeta\in\mathbb{C}^3$ fulfilling $\zeta\cdot\zeta=0$. However, in our scenario this function does not fulfill $D e^{ix\cdot\zeta}=0$, which brings the simplicity in all of the literature works.

Since we know that it is harmonic we can generate a monogenic function through it. Let $\zeta\in \mathbb{C}^3$ such that $\zeta\cdot\zeta:=\zeta_0^2+\zeta_1^2+\zeta_2^2 = 0$, then it holds
\begin{align*}
    \Delta e^{x\cdot\zeta} = 0 \Leftrightarrow D\left(\bar{D}e^{x\cdot\zeta}\right) = 0 \equiv D\left(e^{x\cdot\zeta}\,\bar{\zeta}\right)
\end{align*}

where now $\zeta$ is also defined as a quaternion through $\zeta=\zeta_0+e_1\zeta_1+e_2\zeta_2 \in \mathbb{C}_2.$ Thus the function $E(x,\zeta)=e^{x\cdot\zeta}\,\bar{\zeta}$ is monogenic. This also arises from the choice of $\zeta$, since $\zeta\,\bar{\zeta} = \zeta_0^2+\zeta_1^2+\zeta_2^2=0$.

We make a clear statement of when $\zeta$ is a complex-quaternion or complex-a vector, but in most cases it is clear from context: it is a vector if it is in the exponent and a quaternion otherwise.

We assume the following asymptotic behaviour for $\phi$:
\begin{align}\label{asymptotics}
    \phi_1 = e^{x\cdot\zeta}\bar{\zeta}\mu_1,\\
    \phi_2 = e^{x\cdot\bar{\zeta}^c}\bar{\zeta}^{c}\mu_2
\end{align}

Setting $\tilde{\mu}_1=\bar{\zeta}\mu_1$ and $\tilde{\mu}_2=\bar{\zeta}^{c}\mu_2$ we have the equations:
\begin{align}\label{Dirac_system_mu}
\begin{cases}
    D\tilde{\mu}_1 &= e^{-x\cdot\left(\zeta-\,\bar{\zeta}^{c}\right)}\tilde{\mu}_2q_1 \\ 
    \bar{D}\tilde{\mu}_2 &= e^{x\cdot\left(\zeta-\,\bar{\zeta}^{c}\right)}\tilde{\mu}_1q_2 \\ 
\end{cases}
\end{align}

Further, we assume $\tilde{\mu}\rightarrow \begin{pmatrix}
     1  \\
     0 
\end{pmatrix}$ as $|x|\rightarrow \infty.$ These system of equations will lead us to an integral equation from which we can extract interesting behaviour for $\zeta \rightarrow \infty$.

The main point of this subsection is to demonstrate how we can obtain the system of integral equations related with (\ref{Dirac_system_mu}). Here, the approach is similar to \cite{Lakshtanov_Tejero_Vainberg}, but we need to be careful due to the non-commutative nature of quaternions.

Recall, that $D T=\bar{D}\,\bar{T} = I$ (in appropriate spaces). Hence, applying this to (\ref{Dirac_system_mu}) it holds:
\begin{align*}
\begin{cases}
    \tilde{\mu}_1 = 1 + T\left[e^{-x\cdot\left(\zeta-\bar{\zeta}^{c}\right)}\tilde{\mu}_2q_1\right] \\
    \\
    \tilde{\mu}_2 = \overline{T}\left[e^{x\cdot\left(\zeta-\bar{\zeta}^{c}\right)}\tilde{\mu}_1q_2\right]
\end{cases}
\end{align*}

Thus, we can obtain two integral equations with respect to their function:

\begin{align*}
&\begin{cases}
    \tilde{\mu}_1 = 1 + T\left[e^{-x\cdot\left(\zeta-\bar{\zeta}^{c}\right)} \bar{T}\left[e^{x\cdot\left(\zeta-\bar{\zeta}^{c}\right)}\tilde{\mu}_1q_2\right] q_1\right] \\
    \\
    \tilde{\mu}_2 =   \bar{T}\left[e^{x\cdot\left(\zeta-\bar{\zeta}^{c}\right)} q_2\right] +  \bar{T}\left[e^{x\cdot\left(\zeta-\bar{\zeta}^{c}\right)}T\left[e^{-x\cdot\left(\zeta-\bar{\zeta}^{c}\right)}\tilde{\mu}_2 q_1\right]q_2\right] 
\end{cases}
\end{align*}
\begin{align}\label{Inversion_eq}
\begin{cases}
\tilde{\mu}_1 = 1 + M^1\tilde{\mu}_1 \\
\tilde{\mu}_2 = \overline{T}\left[e^{x\cdot\left(\zeta-\overline{\zeta}^{\mathbb{C}}\right)} q_2\right] + M^2\tilde{\mu}_2
\end{cases} \hspace{-0.4cm}\Leftrightarrow
\begin{cases}
[I - M^1](\tilde{\mu}_1-1) = M^1 1 \\
[I - M^2](\tilde{\mu}_2) = \bar{T}\left[e^{x\cdot\left(\zeta-\bar{\zeta}^{\mathbb{C}}\right)} q_2\right] 
\end{cases}
\end{align}

Our objective now is to study the uniqueness and existence of this equations, we approach this task by proving that $M^j,\, j=1,\,2$ are contractions. 

Instead of working with all possible $\zeta\in \mathbb{C}_{(2)}$ fulfilling $\zeta\bar{\zeta}=0$, we choose them for $k\in\mathbb{R}^3$ as
$$\zeta = k^{\perp} + i\frac{k}{2}, \quad k^{\perp}\cdot k=0$$
and $k^{\perp}$ can be algorithmically found.

We now describe our space of functions in terms of the space variable and $k\in\mathbb{R}^3$ as
\begin{align}
S=L^{\infty}_x(L^p_{k}(|k|>R))
\end{align}

where $R>0$ is a constant. In this space we prove that the operators $M^1,\, M^2$ are indeed contractions:
\begin{lemma}\label{contractions}
Let $p>2$. Then $$\lim_{R\rightarrow\infty} \|M^j\|_{S} = 0.$$
\end{lemma}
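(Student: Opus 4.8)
The plan is to show that $M^1$ and $M^2$ are contractions on $S=L^\infty_x(L^p_k(|k|>R))$ by exploiting the oscillatory factors $e^{\pm x\cdot(\zeta-\bar\zeta^c)}$ together with the smoothing of $T$ and $\bar T$. First I would unwind the definition: with $\zeta=k^\perp+i\frac{k}{2}$ one has $\zeta-\bar\zeta^c = 2i\,\mathrm{Vec}(\zeta)$ (as a vector, $\zeta-\bar\zeta^c$ is purely imaginary and equals $2ik^\perp + \dots$; the key point is that its real part vanishes so the exponentials are genuine oscillations, not growth), and $|k^\perp|=|k|$. So $M^1 g = T\!\left[e^{-x\cdot(\zeta-\bar\zeta^c)}\,\bar T\!\left[e^{x\cdot(\zeta-\bar\zeta^c)} g\, q_2\right] q_1\right]$ is, up to the non-commuting placement of $q_1,q_2$, a conjugated-by-oscillation version of the composition $T\bar T$. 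The heart of the matter is a decay estimate: for fixed $x$, the operator norm on the relevant function spaces of $g\mapsto T[e^{-x\cdot(\zeta-\bar\zeta^c)}\bar T[e^{x\cdot(\zeta-\bar\zeta^c)}g]]$ tends to $0$ as $|k|\to\infty$. This is the three-dimensional quaternionic analogue of the classical fact that $\bar\partial^{-1}$ conjugated by $e^{ik\cdot x/\dots}$ decays like $|k|^{-1}$, and it is where I expect to lean on a stationary-phase / Riemann–Lebesgue type argument applied to the explicit kernel $E(x,y)=-\frac{1}{\omega}\frac{\overline{y-x}}{|y-x|^3}$ of $T$ (and its conjugate for $\bar T$).

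The key steps, in order: (i) Reduce to controlling the single operator $N_\zeta g := T\big[e^{-x\cdot(\zeta-\bar\zeta^c)}\,\bar T\big[e^{x\cdot(\zeta-\bar\zeta^c)} g\big]\big]$, absorbing the bounded multipliers $q_1=-\frac12\bar D\gamma/\gamma$ and $q_2=-\frac12 D\gamma/\gamma$ (which lie in $L^\infty$ with compact support since $\gamma-1$ is supported in $\Omega$ and $\gamma\in W^{1,\infty}$) into the estimate via Hölder; compact support is what lets $L^p_k$ norms interact cleanly with the $L^\infty_x$ norm. (ii) Write the composite kernel of $N_\zeta$ explicitly: $N_\zeta g(x) = \int\!\!\int E(x,y)\,e^{-(y-x)\cdot(\zeta-\bar\zeta^c)}\,\bar E(y,z)\,e^{(z-y)\cdot(\zeta-\bar\zeta^c)}\,g(z)\,dz\,dy$, where I have pulled the $x$- and $z$-independent pieces of the exponentials out and grouped the remaining oscillation as $e^{(x-y)\cdot(\zeta-\bar\zeta^c)}e^{-(y-z)\cdot(\zeta-\bar\zeta^c)}$. (iii) Fix $x$ and $z$ and show the inner double integral over $y$ of the oscillatory kernel $E(x,y)\bar E(y,z)\,e^{i\,2k^\perp\cdot(\text{linear in }y)}$ decays in $|k|$: since $E(x,\cdot)\bar E(\cdot,z)\in L^1_{loc}$ with an integrable singularity in dimension three (the product behaves like $|x-y|^{-2}|y-z|^{-2}$, integrable against $dy$ in $\mathbb{R}^3$ away from and even near the singularities after a Young-type splitting), a Riemann–Lebesgue argument gives pointwise decay to $0$; to upgrade to quantitative decay one splits the $y$-integral into a small ball around the singularities (small by a uniform $L^1$ bound) and a region where the amplitude is smooth (where one integration by parts in the $k^\perp$-direction gains a factor $|k|^{-1}$). (iv) Take the $L^\infty_x$ norm, then the $L^p_k(|k|>R)$ norm, and use dominated convergence (the integrand is bounded by a fixed $L^1$-in-$(y,z)$, $L^p$-in-$k$ majorant coming from the trivial bound $\|N_\zeta\|\le \|T\|\,\|\bar T\|$ uniformly in $k$) to conclude $\|M^j\|_S\to 0$ as $R\to\infty$.

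\textbf{Main obstacle.} The principal difficulty is step (iii): obtaining genuine decay in $|k|$ of the doubly-oscillatory singular integral, uniformly in the base points $x,z$, while the amplitude $E(x,y)\bar E(y,z)$ has non-integrable-looking singularities at $y=x$ and $y=z$. In two dimensions (Lakshtanov–Tejero–Vainberg) the analogous operator is a Beurling-type transform and the decay is classical; in three dimensions with quaternionic kernels one must handle the vector-valued, non-commutative structure and the fact that the phase $\zeta-\bar\zeta^c$ only oscillates in the two directions spanned by $k^\perp$ and the $e_1,e_2$-plane, not in all of $\mathbb{R}^3$, so stationary phase must be applied carefully along the oscillatory directions only. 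I would manage this by a dyadic decomposition of the $y$-region relative to $|k|^{-1}$ and estimating the near-singular contributions by their $L^1$ mass (which is $O(|k|^{-1})$ after rescaling) and the far contributions by non-stationary phase; the bookkeeping of the non-commutative placement of $q_1,q_2,\bar\zeta,\bar\zeta^c$ throughout is a secondary but persistent nuisance, handled by noting all these factors are bounded quaternions and using $|xy|_{\mathbb{C}_2}\le C|x|_{\mathbb{C}_2}|y|_{\mathbb{C}_2}$.
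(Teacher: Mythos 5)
Your overall strategy --- rewrite $M^j$ as an integral operator whose kernel is an oscillatory integral of the two weakly singular Cauchy kernels over the middle variable, obtain decay in $|k|$ by Riemann--Lebesgue after splitting off the singular region, and then feed this decay into the $S$-norm --- is essentially the route the paper takes: it forms the kernel $A(x,y;k)=\int e^{-i(w-y)\cdot k}\,\frac{\overline{x-w}}{|x-w|^3}\frac{w-y}{|w-y|^3}q_2(y)q_1(w)\,dw$, approximates it by cutting off the singularities and mollifying $q_1,q_2$, and applies Riemann--Lebesgue to the smooth approximant. Two points in your write-up are off, however. First, the phase: with $\zeta=k^\perp+i\frac{k}{2}$ one has, in vector form, $\zeta-\bar{\zeta}^c=2i\,\mathrm{Im}\,\zeta=ik$, not $2ik^\perp$ or ``$2i\,\mathrm{Vec}(\zeta)$''; the conjugating exponentials are exactly $e^{\mp ix\cdot k}$. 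So the oscillation is a genuine plane wave at frequency $k$ with $|k|\to\infty$, Riemann--Lebesgue applies directly to the $L^1$ amplitude $E(x,w)\bar{E}(w,y)q_1(w)$ in the $w$-variable, and your announced ``main obstacle'' (oscillation only along the $k^\perp$, $e_1,e_2$ directions, requiring careful directional stationary phase) is an artifact of the miscomputed phase --- that difficulty does not exist.

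Second, and more seriously, your step (iv) as stated would fail. You propose to conclude $\|M^j\|_S\to 0$ by dominated convergence in $k$, with the majorant coming from the $k$-uniform bound $\|N_\zeta\|\le\|T\|\,\|\bar{T}\|$; but a $k$-independent constant is not $p$-th power integrable over the unbounded region $|k|>R$, so there is no admissible dominating function and dominated convergence does not apply. What is actually needed is not integrability of the operator norm in $k$ but smallness, uniform over $|k|>R$, of the kernel: for fixed $x$, Minkowski's integral inequality gives $\|M^1f(x,\cdot)\|_{L^p_k(|k|>R)}\le \|f\|_S\int_\Omega\sup_{|k|>R}|A(x,y;k)|\,dy$, so the lemma reduces to showing $\sup_x\int_\Omega\sup_{|k|>R}|A(x,y;k)|\,dy\to 0$ as $R\to\infty$. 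This is precisely where the paper's two-step approximation (cutoff of the singular set with an $\epsilon$-control in $L^1_y$ uniform in $x$ and $k$, then smooth $L^1$-approximation of the potentials) earns its keep: it upgrades pointwise Riemann--Lebesgue decay to the uniform decay that survives the $\sup_{|k|>R}$. Your dyadic/non-stationary-phase splitting could serve the same purpose, but the conclusion must be routed through this Minkowski-type bound (or an equivalent uniform estimate), not through dominated convergence in $k$.
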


To further study the system (\ref{Inversion_eq}), we also need to show that the right-hand side is in $S$ for an $R$ large enough:
\begin{lemma}\label{right_hand_side}
Let $p>2$. Then there exists $R>0$ such that 
\begin{align}
    M^1 1 \in S,\\
    \label{T_in_S}\bar{T}\left[e^{x\cdot\left(\zeta-\bar{\zeta}^{\mathbb{C}}\right)} q_2\right] \in S
\end{align}
\end{lemma}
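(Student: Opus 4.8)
\emph{Strategy.} The plan is to funnel both statements into a single Fourier‑analytic estimate for the ``building block'' $g_k:=\bar T\!\left[e^{x\cdot(\zeta-\bar{\zeta}^{\mathbb{C}})}q_2\right]$, after first noting that the exponent here is not actually exponentially growing. With $\zeta=k^{\perp}+i\frac k2$ one has, componentwise, $\zeta_j-(\bar\zeta^{\mathbb{C}})_j=\zeta_j-\zeta_j^{c}=2i\,\mathrm{Im}\,\zeta_j=ik_j$, so $\zeta-\bar\zeta^{\mathbb{C}}=ik$ and $e^{\pm x\cdot(\zeta-\bar\zeta^{\mathbb{C}})}=e^{\pm i x\cdot k}$ is unimodular. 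Thus $g_k(x)=\bar T[e^{ix\cdot k}q_2](x)$ and $M^1 1=T[e^{-ix\cdot k}\,g_k\,q_1]$. Granting $g_k\in S$, I would deduce $M^1 1\in S$ by writing $M^1 1(x)=\int_\Omega E(x,y)\,e^{-iy\cdot k}g_k(y)q_1(y)\,dy$ with $E$ the Cauchy kernel of $T$ from \eqref{T_operator}, $|E(x,y)|=\tfrac1{4\pi}|x-y|^{-2}$; since $|e^{-iy\cdot k}|=1$ and $q_1\in L^\infty$, Minkowski's integral inequality in the $L^p_k(|k|>R)$ norm gives
\[
\big\|M^1 1(x)\big\|_{L^p_k}\le\|q_1\|_{L^\infty}\int_\Omega |E(x,y)|\,\|g_k(y)\|_{L^p_k}\,dy\le\|q_1\|_{L^\infty}\,\|g_k\|_{S}\,\sup_{x}\int_\Omega|x-y|^{-2}\,dy,
\]
and $\sup_x\int_\Omega|x-y|^{-2}\,dy\le 4\pi\,\mathrm{diam}\,\Omega<\infty$ because $\Omega$ is bounded. (The operator $T$ even gains a derivative, so $M^1 1$ is more regular still, but $S$‑membership is all that is needed.)

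\emph{The building block.} For fixed $x$, substitute $z=x-y$ to obtain $g_k(x)=e^{ix\cdot k}\,\widehat{h_x}(k)$, where $h_x(z):=\bar E(z)\,q_2(x-z)$ is supported in $x-\overline\Omega$ and satisfies $|h_x(z)|\lesssim\|q_2\|_{L^\infty}|z|^{-2}$, $\bar E$ being the kernel of $\bar T$ (of homogeneity $-2$, with Fourier transform homogeneous of degree $-1$). Since $|e^{ix\cdot k}|=1$ it suffices to bound $\widehat{h_x}$ in $L^p_k$ uniformly in $x$. Because $\Omega$ is bounded, for $p'<\tfrac32$ one has uniformly in $x$
\[
\|h_x\|_{L^{p'}(\mathbb R^3)}^{p'}\lesssim\|q_2\|_{L^\infty}^{p'}\int_{|z|\le\mathrm{diam}\,\Omega}|z|^{-2p'}\,dz<\infty ,
\]
so the Hausdorff--Young inequality gives $\|g_k\|_{S}=\sup_x\|\widehat{h_x}\|_{L^p_k}\le C\sup_x\|h_x\|_{L^{p'}}<\infty$ for the dual exponent, and letting $R$ be large makes the $S$‑norm small (which is what one wants alongside Lemma~\ref{contractions}). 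Equivalently, the leading singularity $q_2(x)\,\bar E(z)$ of $h_x$ forces $\widehat{h_x}(k)=O(|k|^{-1})$, the remaining contributions being more regular.

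\emph{Main obstacle.} The crux is the borderline decay in $k$: in three dimensions the Cauchy kernel has singularity $|z|^{-2}$, whose Fourier transform decays only like $|k|^{-1}$, and $|k|^{-1}\in L^p(\{|k|>R\})$ exactly for $p>3$ — the genuine dimensional analogue of the $p>2$ threshold of the plane. To push the argument to the full stated range one cannot use the kernel size alone; one must use the oscillation together with the special form $q_2=-\tfrac12\tfrac{D\gamma}{\gamma}=D\!\big(-\tfrac12\log\gamma\big)$ with $-\tfrac12\log\gamma\in W^{1,\infty}$ of compact support, integrating by parts once to move the derivative off $q_2$ at the cost of a factor $|k|$ and a Calderón--Zygmund (principal‑value) kernel, and then exhibiting the cancellation between that $|k|$‑factor term and the local part of the singular‑integral term. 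Making this cancellation quantitative, and above all uniform in $x$ including for $x$ near $\partial\Omega$, is the step I expect to be hardest; it is also where the noncommutativity of $T,\bar T,D,\bar D$ must be handled with care.
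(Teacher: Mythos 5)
Your argument is essentially the paper's own proof: the paper also writes $M^1 1$ as $T$ applied to $e^{-iw\cdot k}\,\bar T[e^{iy\cdot k}q_2]\,q_1$, applies Minkowski's integral inequality in the $L^p_k(|k|>R)$ norm to peel off the outer kernel $|x-w|^{-2}|q_1(w)|$ (bounded after integration over the bounded set $\Omega$, exactly as in your first display), and then treats the inner integral as a Fourier transform via Hausdorff--Young, reducing everything to $\bigl[\int_\Omega |q_2(y)|^{p'}|w-y|^{-2p'}\,dy\bigr]^{1/p'}$; statement \eqref{T_in_S} is handled by the same two inequalities without the outer layer. So in structure your proposal and the paper coincide.

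The difference is in the exponent bookkeeping, and here your ``main obstacle'' paragraph is pointing at a genuine soft spot \emph{of the paper}, not of your argument. The paper asserts $\bigl[\int_\Omega |q_2|^{p'}|w-y|^{-2p'}\,dy\bigr]^{1/p'}\le C\|q_2\|_\infty$ for all $p>2$, citing ``Young's convolution inequality and a Riesz type estimate,'' but that integral is finite only when $2p'<3$, i.e.\ $p>3$: for $2<p\le 3$ the integrand $|w-y|^{-2p'}$ is not locally integrable near $y=w\in\Omega$, and no convolution inequality can salvage a pointwise divergent integral. So the paper's written proof, like yours, really establishes the lemma only for $p>3$ --- which, as you note, is the correct three-dimensional analogue of the planar $p>2$ threshold, and is uniform in $x$ for free since $\Omega$ is bounded. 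Two remarks on your closing paragraph: (i) the integration-by-parts/cancellation program you sketch (using $q_2=D(-\tfrac12\log\gamma)$) is not carried out in the paper either, so you should not feel you failed to reproduce a hidden step; (ii) it is also unnecessary for the way the lemma is used, since the downstream arguments (the contraction Lemma~\ref{contractions} and the H\"older estimate in the reconstruction theorem, where one only needs $\int_{R<|k|<2R}|k|^{-3q}\,dk\to0$) work for any fixed finite $p$, so one can simply restate the lemma with $p>3$. The only genuine incompleteness in your proposal is that you do not prove the statement in the stated range $2<p\le3$; but neither does the paper.
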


The above Lemmas imply the existence and uniqueness of $(\tilde{\mu}_1, \tilde{\mu}_2)$ solving the system (\ref{Inversion_eq}) with respect to the potential $q$. This is essential for the reconstruction procedure we show up next.

\subsection{Reconstruction from scattering data}

In this section, we are mixing ideas from \cite{Lakshtanov_Tejero_Vainberg} and \cite{Nachman3D} with quaternionic theory to obtain the potential from the scattering data.

Starting from Clifford-Green theorem
\begin{align*}
    \int_{\Omega} \left[g(x)\left(\bar{D}f(x)\right) + \left(g(x)\bar{D}\right)f(x)\right]\, dx = \int_{\Bound} g(x)\overline{\eta(x)}f(x)\, dS_x
\end{align*}
and using $g(x; i\xi+\zeta) = (i\xi+\zeta)e^{-x\cdot(i\xi+\zeta)}$ for $\xi\in\mathbb{R}^3$ such that $(i\xi+\zeta)\cdot(i\xi+\zeta)=0$. This implies that $g\bar{D}=0.$ Thus we define the scattering data as:
\begin{align}\label{Scattering_data}
    h(\xi,\zeta) = \left(i\xi+\zeta\right)\int_{\Bound} e^{-x\cdot(i\xi+\zeta)}\overline{\eta(x)}\phi_2(x,\zeta)\,dx
\end{align}
Applying now Clifford-Green theorem we obtain another form for the scattering data:
\begin{align*}
    h(\xi, \zeta) &= \left(i\xi+\zeta\right)\int_{\Omega} e^{-x\cdot(i\xi+\zeta)}\bar{D}\phi_2(x,\zeta)\,dx \\
    &= (i\xi+\zeta)\int_{\Omega} e^{-ix\cdot\xi}\left(e^{-x\cdot\zeta}\phi_1(x,\zeta)\right)q_2(x)\, dx, \quad \text{by } \overline{\D}\phi_2=\phi_1q_2\\
    &= (i\xi+\zeta)\int_{\Omega} e^{-ix\cdot\xi}\left(\zeta\mu_1(x,\zeta)\right)q_2(x)\, dx \\
    &= i\xi \int_{\Omega} e^{-ix\cdot\xi}\tilde{\mu}_1(x,\zeta)q_2(x)\, dx, \quad \textnormal{since } \bar{\zeta}\zeta =0 \\
    &= i\xi \hat{q}_2({\xi}) + i\xi\int_{\Omega} e^{-ix\cdot\xi}\left[\tilde{\mu}_1(x,\zeta)-1\right]q_2(x)\, dx.
\end{align*}

Thus, we have:
\begin{align}\label{reconstruction_1}
\hat{q}_2(\xi) = \frac{h(\xi,\zeta)}{i\xi} - \int_{\Omega} e^{-ix\cdot\xi} \left[\tilde{\mu}_1(x,\zeta)-1\right]q_2(x)\,dx
\end{align}
This is yet not enough to reconstruct the potential, since the integral acts as a residual in the reconstruction and requires data that we technically do not have. Therefore, we integrate everything over an annulus in $k$
\begin{align}\label{scattering_data_}
    \nonumber\int_{R<|k|<2R} \frac{\hat{q}_2(\xi)}{|k|^3}\, dk &= \frac{1}{i\xi}\int_{R<|k|<2R} \frac{h(\xi, \zeta(k))}{|k|^3}\, dk \\
    &- \int_{R<|k|<2R} \frac{1}{|k|^3}\int_{\Omega} e^{-ix\cdot\xi}\left[\tilde{\mu}_1(x,\zeta(k))-1\right]q_2(x)\, dx, 
\end{align}
since the potential does not depend on $k$ it can be taken out of the integral and taking the limit as $R\rightarrow \infty$ leads the second integral on the right to decay to zero, obtaining a reconstruction formula.
\begin{theorem}
Let $\Omega\subset\mathbb{R}^3$ a bounded Lipschitz domain, $q\in L^{\infty}(\Omega)$ be a complex-valued potential obtained through a conductivity $\gamma\in W^{1,\infty}(\Omega),\, \text{Re } \gamma\geq c>0.$ Then, we can reconstruct the potential from 
\begin{align}\label{potential_scattering}
    \hat{q}_2(\xi) = \lim_{R\rightarrow\infty} \frac{C}{i\xi} \int_{R<|k|<2R} \frac{h(\xi, \zeta(k))}{|k|^3}\, dk,
\end{align}
where $C = \frac{1}{4\pi\ln(2)}$.
\end{theorem}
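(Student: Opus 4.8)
The plan is to assemble the reconstruction formula \eqref{potential_scattering} from the two ingredients already developed: the pointwise identity \eqref{reconstruction_1} and the averaging argument sketched around \eqref{scattering_data_}. First I would fix $\xi\in\mathbb{R}^3$ and, for each $k\in\mathbb{R}^3$ with $|k|>R$ large, choose $k^{\perp}=k^{\perp}(k,\xi)$ orthogonal to $k$ (and, where needed for the Green's identity above, so that $(i\xi+\zeta)\cdot(i\xi+\zeta)=0$, which forces a relation between $k^{\perp}$, $k$ and $\xi$ of the type $2k^{\perp}\cdot\xi = |\xi|^2$ after expanding $\zeta=k^{\perp}+i\tfrac{k}{2}$). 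For all such $k$, Lemma~\ref{contractions} and Lemma~\ref{right_hand_side} guarantee that $\tilde\mu_1(\cdot,\zeta(k))-1$ exists, is unique, and lies in $S=L^\infty_x(L^p_k(|k|>R))$; hence \eqref{reconstruction_1} holds for a.e.\ such $k$. Dividing by $|k|^3$ and integrating over the annulus $R<|k|<2R$ gives \eqref{scattering_data_} exactly, after pulling the $k$-independent factor $\hat q_2(\xi)$ out of the left integral, which evaluates to $\hat q_2(\xi)\int_{R<|k|<2R}|k|^{-3}\,dk = \hat q_2(\xi)\cdot 4\pi\ln 2$, explaining the constant $C=\tfrac{1}{4\pi\ln 2}$.

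The crux is then to show that the residual term
\[
E(R):=\int_{R<|k|<2R}\frac{1}{|k|^3}\int_{\Omega}e^{-ix\cdot\xi}\bigl[\tilde\mu_1(x,\zeta(k))-1\bigr]q_2(x)\,dx\,dk
\]
tends to $0$ as $R\to\infty$. Here I would estimate by Fubini (legitimate since $q_2\in L^\infty(\Omega)$ is compactly supported and $\tilde\mu_1-1\in S$), bounding
\[
|E(R)|\le \|q_2\|_{L^\infty}\int_{\Omega}\Bigl(\int_{R<|k|<2R}\frac{|\tilde\mu_1(x,\zeta(k))-1|}{|k|^3}\,dk\Bigr)dx,
\]
and then applying H\"older in $k$ with exponents $p$ and $p'$: the inner $k$-integral is at most $\|\tilde\mu_1(x,\cdot)-1\|_{L^p_k(R<|k|<2R)}\cdot\bigl(\int_{R<|k|<2R}|k|^{-3p'}\,dk\bigr)^{1/p'}$. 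Since $p>2$ we have $p'<2<3$, so $3p'>p'\ge 1$; more to the point $\int_{R<|k|<2R}|k|^{-3p'}\,dk \asymp R^{3-3p'}\to 0$ as $R\to\infty$ (note $3p'>3 \iff p'>1$, i.e.\ always for finite $p$). Combining with $\sup_x\|\tilde\mu_1(x,\cdot)-1\|_{L^p_k}<\infty$ from membership in $S$, and integrating the resulting $x$-independent bound over the bounded set $\Omega$, yields $|E(R)|\lesssim |\Omega|\,\|q_2\|_\infty\, R^{(3-3p')/p'}\to 0$.

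Passing to the limit $R\to\infty$ in \eqref{scattering_data_} then leaves
\[
\hat q_2(\xi)=\lim_{R\to\infty}\frac{C}{i\xi}\int_{R<|k|<2R}\frac{h(\xi,\zeta(k))}{|k|^3}\,dk,
\]
which is \eqref{potential_scattering}. I expect the main obstacle to be \emph{not} the decay estimate itself but the uniformity in $\xi$ and the compatibility of the admissibility constraints on $\zeta(k)$: one must verify that for every fixed $\xi$ there is a genuinely $(n-1)$-parameter family of $k$ with $|k|>R$ for which a valid $k^\perp$ exists making $g\bar D=0$, that the map $k\mapsto\zeta(k)$ used in $S$ is the same one used in the Green's-identity step, and that the constant $R$ furnished by Lemma~\ref{right_hand_side} can be taken independent of $\xi$ (or, if not, that one only needs $R\to\infty$ for each fixed $\xi$, which suffices for recovering $\hat q_2$ pointwise and hence $q_2$). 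A secondary technical point is justifying the interchange of $\lim_{R\to\infty}$ with the $x$-integral defining $\hat q_2$, which follows from the explicit $R$-power bound on $E(R)$ above rather than from dominated convergence.
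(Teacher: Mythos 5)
Your proposal follows essentially the same route as the paper's proof: integrate the pointwise identity \eqref{reconstruction_1} over the annulus $R<|k|<2R$ against $|k|^{-3}$, pull out the $k$-independent factor $\hat q_2(\xi)$ (yielding the constant $4\pi\ln 2$), and kill the residual term by H\"older in $k$ with exponents $p,p'$ together with the membership $\tilde\mu_1-1\in S$ and the decay of $\int_{R<|k|<2R}|k|^{-3p'}\,dk$. If anything, your ordering of the estimate (H\"older in $k$ at fixed $x$, then taking the supremum in $x$) matches the definition of the norm on $S=L^\infty_x(L^p_k)$ more cleanly than the paper's write-up, and your closing remarks on the $\xi$-dependent admissibility of $\zeta(k)$ flag a point the paper leaves implicit rather than a gap in your argument.
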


\begin{proof}
    The scattering data is defined from the solutions of the Dirac system (\ref{Inversion_eq}) and therefore it holds that $\tilde{\mu}_1 - 1 \in S$. Starting from (\ref{scattering_data_}) we obtain by integrating the right-hand side for any $\xi\in\mathbb{R}^3$:

    \begin{align*}
        4\pi\ln 2\,\hat{q}_2(\xi) &= \frac{1}{i\xi}\int_{R<|k|<2R} \frac{h(\xi, \zeta(k))}{|k|^3}\, dk \\
        &- \int_{R<|k|<2R} \frac{1}{|k|^3}\int_{\Omega} e^{-ix\cdot\xi}\left[\tilde{\mu}_1(x,\zeta(k))-1\right]q_2(x)\, dx        
    \end{align*}

    Let $p>2$ and $1/p+1/q=1$. We estimate the last integral:
    \begin{align*}
        \Bigg|&\int_{R<|k|<2R} \frac{1}{|k|^3}\int_{\Omega} e^{-ix\cdot\xi}\left[\tilde{\mu}_1(x,\zeta(k))-1\right]q_2(x)\, dx\Bigg| \leq \\
        &\leq \int_{R<|k|<2R} \frac{1}{|k|^3}\int_{\Omega} \left|e^{-ix\cdot\xi}\left[\tilde{\mu}_1(x,\zeta(k))-1\right]q_2(x)\right|\, dx \\
        &\leq C_{\Omega}\|q\|_{\infty} \int_{R<|k|<2R} \frac{1}{|k|^3}\sup_x \left|\tilde{\mu}_1(x,\zeta(k))-1\right|\, dk \\
        &\leq C_{\Omega}\|q\|_{\infty} \left[\int_{R<|k|<2R}\frac{1}{|k|^{3q}}\, dk\right]^{1/q}\left[\int_{R<|k|<2R}  \sup_x \left|\tilde{\mu}_1(x,\zeta(k))-1\right|^p\, dk\right]^{1/p} \\
        &\leq C_{\Omega}\|q\|_{\infty} \|\tilde{\mu}_1-1\|_S \left[\int_{R<|k|<2R}\frac{1}{|k|^{3q}}\, dk\right]^{1/q}
    \end{align*}
    Taking the limit as $R\rightarrow 0$ the integral that is left goes to zero which implies the desired decay to zero and leaves us with our reconstruction formula.     
\end{proof}

Now, in order to connect the functions that solve the electrical conductivity equation (\ref{Conductivity_eq1}) and the solutions to the Dirac equation (\ref{Dirac_system}), which are exponential growing, we introduce the following result:

\begin{proposition}\label{poincare_lemma_type}
    Let $\Omega$ be a bounded domain in $\mathbb{R}^3$.
    Let $\phi=(\phi_1,\phi_2)$ be a solution of the Dirac system (\ref{Dirac_system}) for a potential $q\in L^{\infty}(\Omega)$ associated with the complex-conductivity $\gamma\in W^{1, \infty}(\Omega).$ 

    If $\phi_1=\bar{\phi}_2$ then there exists a unique solution $u$ of:
    \begin{align}
        \begin{cases}
        \bar{D}u = \gamma^{-1/2}\phi_1, \\
        D u = \gamma^{-1/2}\phi_2.
        \end{cases}
    \end{align}

    Further, this function fulfills the conductivity equation $$\nabla\cdot\left(\gamma\nabla u\right)=0 \text{ in } \Omega.$$
\end{proposition}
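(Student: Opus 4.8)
The plan is to show that the overdetermined system $\bar D u = \gamma^{-1/2}\phi_1$, $D u = \gamma^{-1/2}\phi_2$ is consistent, solve it, and then verify the conductivity equation. The key structural observation is that a quaternion-valued function $u$ with $\bar D u = g_1$ and $D u = g_2$ prescribed amounts to prescribing both "derivatives" of $u$; writing $D = \partial_0 + e_1\partial_1 + e_2\partial_2$ and $\bar D = \partial_0 - e_1\partial_1 - e_2\partial_2$, we recover $2\partial_0 u = g_1 + g_2$ (after reordering) and the vector-part equations give the remaining partials, so the system determines $\nabla u$ pointwise and hence $u$ up to an additive constant; uniqueness is then immediate once we fix normalization (e.g. matching a boundary value, or the decay built into the exponentially growing solutions). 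Existence reduces to an integrability (closedness) condition: the candidate gradient must be curl-free, equivalently $D(\gamma^{-1/2}\phi_1) = \bar D(\gamma^{-1/2}\phi_2)$ must hold in the appropriate (scalar/consistent) sense. This is precisely where the hypothesis $\phi_1 = \bar\phi_2$ and the Dirac system (\ref{Dirac_system}) are used.

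First I would compute $D(\gamma^{-1/2}\phi_1)$ using the Leibniz rule for $D$ (being careful with non-commutativity: $D(ab) = (Da)b + \text{cross terms}$, so one keeps $\gamma^{-1/2}$ — a scalar function — on the left where it commutes). Using $D\gamma^{-1/2} = -\tfrac12\gamma^{-1/2}\gamma^{-1}D\gamma$ and the first Dirac equation $D\phi_1 = \phi_2 q_1$ with $q_1 = -\tfrac12 \bar D\gamma/\gamma$, this should collapse to an expression in $\phi_2$, $\gamma$, $D\gamma$, $\bar D\gamma$. Symmetrically, I would compute $\bar D(\gamma^{-1/2}\phi_2)$ using $\bar D\phi_2 = \phi_1 q_2$ with $q_2 = -\tfrac12 D\gamma/\gamma$. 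Then substituting $\phi_1 = \bar\phi_2$ into one side, the two expressions should match — this is the compatibility condition. I expect the cancellation to work because $q_1, q_2$ were defined exactly so that the transformation $u \mapsto \gamma^{1/2}(\bar D u, Du)^T$ is invertible, but the conjugation constraint $\phi_1 = \bar\phi_2$ is what makes the two halves of the system come from a single $u$ rather than an unconstrained pair.

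Having established consistency, I would define $u$ by integration — concretely via the right-inverse $T$: set $u = T(\gamma^{-1/2}\phi_2) + (\text{monogenic correction})$, or directly integrate the now-curl-free gradient field along paths in $\Omega$ (legitimate since $\Omega$ is a domain, assumed connected); the monogenic ambiguity $Dh = 0$ is pinned down by the required asymptotics/normalization, giving uniqueness. Finally, for the conductivity equation: from $\bar D u = \gamma^{-1/2}\phi_1$ I get $\gamma^{1/2}\bar D u = \phi_1$, so $D(\gamma^{1/2}\bar D u) = D\phi_1 = \phi_2 q_1 = \gamma^{1/2}(Du)\,q_1 = -\tfrac12(\gamma^{1/2}Du)\gamma^{-1}\bar D\gamma$. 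Retracing the displayed computation in Section 3 that derived (\ref{Dirac_system}) from $\nabla\cdot(\gamma\nabla u)=0$ — which shows $D(\gamma^{1/2}\bar D u) = \gamma^{1/2}\Delta u + (D\gamma^{1/2})\bar D u$ and equals $\phi_2 q_1$ iff $\nabla\cdot(\gamma\nabla u)=0$ — I conclude that $u$ solves the conductivity equation. The main obstacle I anticipate is the non-commutative bookkeeping in the compatibility computation: one must track left/right placement of the quaternion factors $\phi_i$, $\bar D\gamma$, $D\gamma$ carefully, and verify that $\phi_1 = \bar\phi_2$ (together with $\overline{\gamma^{-1/2}} = (\gamma^c)^{-1/2}$ and how conjugation intertwines $D$ and $\bar D$) is exactly the right hypothesis to force the match; getting the conjugation/complex-conjugation conventions consistent is the delicate point, everything else is routine.
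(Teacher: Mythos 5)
There is a genuine gap in your existence step: the compatibility condition you propose to verify is not the full integrability condition in three dimensions. Writing $g_1=\gamma^{-1/2}\phi_1=\bar g_2$ and $g_2=\gamma^{-1/2}\phi_2=F_0+e_1F_1+e_2F_2+e_3F_3$, a scalar $u$ with $Du=g_2$, $\bar D u=g_1$ exists only if (i) $F_3\equiv 0$ and (ii) the field $(F_0,F_1,F_2)$ is curl-free, i.e. all three conditions $\partial_1F_0=\partial_0F_1$, $\partial_2F_0=\partial_0F_2$, $\partial_1F_2=\partial_2F_1$ hold. A direct computation shows that the quaternionic identity $D g_1=\bar D g_2$ (which indeed follows from the Dirac system, $\phi_1=\bar\phi_2$, $q_2=\bar q_1$ and $\mathrm{Sc}(ab)=\mathrm{Sc}(ba)$) is equivalent only to $\partial_1F_0=\partial_0F_1$, $\partial_2F_0=\partial_0F_2$ and $\partial_0F_3=0$: the scalar and $e_3$ parts of $Dg_1-\bar D g_2$ cancel identically, so the third curl component and the vanishing of the $e_3$-component of $\phi_2$ are never controlled. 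This is exactly the difference from the two-dimensional Brown--Uhlmann situation, where the single $\bar\partial$/$\partial$ compatibility \emph{is} the whole closedness condition; in 3D it is not, and this is the ``lack of a well-suited Poincar\'e lemma'' that the paper explicitly invokes as the reason a path-integration argument is unavailable. Your proposal also leaves the uniqueness normalization vague (the proposition is posed on a bounded $\Omega$, so ``decay at infinity'' is not available there), though that is minor compared to the integrability gap.

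The paper's proof avoids this entirely: it solves the two equations \emph{separately}, taking $u$ with $\bar D u=\gamma^{-1/2}\phi_1$ and $v$ with $Dv=\gamma^{-1/2}\phi_2$ (via the right inverses $\bar T$, $T$ and Borel--Pompeiu), shows using $\phi_1=\bar\phi_2$, $q_2=\bar q_1$ that $h=u-v$ is harmonic, and then uses the boundary integral machinery -- the identities $\overline{F}_{\partial\Omega}(\gamma^{-1/2}\phi_1)=\overline{F}_{\partial\Omega}u$, $F_{\partial\Omega}(\gamma^{-1/2}\phi_2)=F_{\partial\Omega}v$, the Plemelj--Sokhotzki formula and the projector $\frac12(I+S_{\partial\Omega})$, together with Lemma \ref{average_lemma} -- to force $\mathrm{tr}\,F_{\partial\Omega}h=0$, hence $h$ constant and then zero on $\partial\Omega$, and finally $h\equiv 0$ in $\Omega$ by harmonicity. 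That boundary-operator argument is the substitute for the missing Poincar\'e lemma, and it is the piece your proposal is missing. Your verification of the conductivity equation at the end does coincide with the paper's computation and is fine once existence of a single scalar $u$ is secured.
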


Let us recall the main theorem, that we are now able to prove with all these pieces we assembled.\\

\textbf{Theorem 1.1}
\textit{Let $\Omega\subset\mathbb{R}^3$ a bounded Lipschitz domain, $\gamma_i\in W^{1,\infty}(\Omega), \, i=1,\, 2$ be two complex-valued conductivities with $\textnormal{Re }\gamma_i\geq c>0.$ 
$$\text{If } \Lambda_{\gamma_1} = \Lambda_{\gamma_2}, \text{ then } \gamma_1=\gamma_2.$$}

\begin{proof}
Due to Theorem 3.3, one only needs to show that the scattering data $h$ for $|k|>>1$ is uniquely determined by the Dirichlet-to-Neumann map $\Lambda_{\gamma}$. Due to the lack of Poincaré Lemma in our current framework in quaternionic analysis with the $D$ and $\bar{D}$ operator, than a new technique is required to obtain a similar proof to \cite{Brown_Uhlmann}, for example.

For such, let us start with two conductivities $\gamma_1, \, \gamma_2$ in $W^{1,\infty}(\Omega)$ for $\Omega$ a bounded domain. By hypothesis $\Lambda_{\gamma_1}=\Lambda_{\gamma_2}$ and thus by \cite{Pombo_3D} we have $\left.\gamma_1\right|_{\partial\Omega}=\left.\gamma_2\right|_{\partial\Omega}$.

Further, we can extend $\gamma_j,\, j=1,2$ outside $\Omega$ in such a way that in $\mathbb{R}^3\setminus \Omega$ and $\gamma_j-1\in W^{1,\infty}_{\text{comp}}(\mathbb{R}^3)$. Let $q_j, \phi^j, \mu^j, h_j,\, j=1,2$ be the potential and the solution in (\ref{Dirac_system}), the function in (\ref{asymptotics}), and the scattering data in (\ref{Scattering_data}) all associated with the conductivity $\gamma_j$. 

Due to the scattering formulation at the boundary $\partial\Omega$, then we just want to know if $\phi^1=\phi^2$ on $\partial\Omega$ when $|k|>>1$.

First, by Proposition 3.4, we know that there exists an $u_1$ such that $$\phi^1=\gamma_1^{1/2}(\bar{D}u_1, Du_1)^T,$$ which is a solution to 
\begin{align*}
    \nabla\cdot(\gamma_1\nabla u_1)=0 \text{ in } \mathbb{R}^3.
\end{align*}

Now, let us define $u_2$ by
\begin{align*}
u_2 = \begin{cases}
    u_1 &\text{ in } \mathbb{R}^3\setminus \Omega,\\
    u &\text{ in } \Omega.
\end{cases}
\end{align*}

where $\hat{u}$ is the solution to the Dirichlet problem 
\begin{align*}
\begin{cases}
    \nabla\cdot\left(\gamma_2\nabla u\right)=0 &\text{ in } \Omega,\\
    u = u_1 &\text{ on } \partial\Omega.
\end{cases}
\end{align*}

Let $g\in C^{\infty}_c(\mathbb{R}^3).$ Then,
\begin{align*}
    \int_{\mathbb{R}^3} \gamma_2\nabla u_2\nabla g\, dx &= \int_{\mathbb{R}^3\setminus\Omega} \gamma_1\nabla u_1\nabla g\, dx + \int_{\Omega} \gamma_2\nabla \hat{u}\nabla g\, dx \\
    &= -\int_{\partial\Omega} \Lambda_{\gamma_1}\left[\left.u_1\right|_{\partial\Omega}\right]g\, ds_x + \int_{\partial\Omega} \Lambda_{\gamma_2}\left[\left.u\right|_{\partial\Omega}\right]g\, ds_x \\
    &= 0.
\end{align*}

Hence, $u_2$ is the solution of $\nabla\cdot\left(\gamma_2\nabla u_2\right)=0$ in $\mathbb{R}^3$. Further, the following function 
$$\psi^2=\gamma_2^{1/2}\left(\bar{D}u_2,Du_2\right)^T$$ is the solution of (\ref{Dirac_system}) where the potential is given by $\gamma_2$.

Furthermore, $\psi^2$ has the asymptotics of $\phi^1$ in $\mathbb{R}^3\setminus\Omega$, thus by Lemma 3.1 and 3.2 it will be the unique solution of the respective integral equation of (\ref{Dirac_system}). Thus, $\psi^2$ will be equal $\phi^2$ when $|k|>R$. Since, on the outside $\psi^2\equiv \phi^1$. Then we obtain:
\begin{align*}\phi^1=\phi^2 \text{ in } \mathbb{R}^3\setminus\Omega.\end{align*}

In particular, we have equality at the boundary $\partial\Omega$.
So, this implies that if the Dirichlet-to-Neumann maps are equal the respective scattering data will also be the same. Thus, the Dirichlet-to-Neumann map uniquely determines the potential $q$. 

From the definition of $q$, we can uniquely determine the conductivity $\gamma$ up to a constant, which in the end is defined by $\left.\gamma\right|_{\partial\Omega}$ which is uniquely determined by the Dirichlet-to-Neumann map $\Lambda_{\gamma}$.
\end{proof}

\section{Auxiliary Proofs}

\textbf{Proof of Lemma 3.1.}

Let us assume, without loss of generality, that $f$ is a scalar function. Further, we present the proof for $M^1$, since for $M^2$ it follows analogously.

Recall, that we choose $\zeta\in\mathbb{C}_{(2)}$ with respect to $k\in\mathbb{R}_{(2)}$ as
$$\zeta = k^{\perp} + i\frac{k}{2}, \quad k^{\perp}\cdot k=0.$$

In vector form, this leads to $\zeta-\zeta^c = ik$ which implies the following deductions:
\begin{align*}
    M^1f(x) &= \int_{\mathbb{R}^3} e^{-w\cdot\left(\zeta-\bar{\zeta}^c\right)}\frac{\overline{x-w}}{|x-w|^3}\int_{\mathbb{R}^3}e^{y\cdot\left(\zeta-\bar{\zeta}^c\right)} \frac{w-y}{|w-y|^3}f(y)q_2(y)\,dy\, q_1(w)\, dw \\ &= 
    \int_{\mathbb{R}^3} \int_{\mathbb{R}^3} e^{-i w\cdot k}\frac{\overline{x-w}}{|x-w|^3}e^{i y\cdot k} \frac{w-y}{|w-y|^3}f(y)q_2(y) q_1(w)\, dwdy \\
    &= \int_{\mathbb{R}^3} A(x, y; k) f(y)\, dy,
\end{align*}
where
\begin{align*}
    A(x, y; k) = \int_{\mathbb{R}^3} e^{-i(w-y)\cdot k}\frac{\overline{x-y}}{|x-y|^3}\frac{w-y}{|w-y|^3}q_2(y)q_1(w)\, dw.
\end{align*}
Due to the compact support of the potential $q_2$, it holds that $A$ has compact support on the second variable.

Let us now apply the norm in terms of $k$ to it:
\begin{align*}
    \|Mf(x,\cdot)\|_{L^p(|k|>R)} &= \left[\int_{|k|>R} |Mf(x,\zeta)|^p\, d\sigma_{\zeta}\right]^{1/p} \\ &= \left[\int_{|k|>R} \left|\int_{\Omega} A(x, y; k)f(y)\,dy\right|^p\, d\sigma_{k}\right]^{1/p} \\
    &\leq \int_{\Omega} \left[\int_{|k|>R} | A(x, y; k)f(y)|^p\, d\sigma_{k}\right]^{1/p}dy \\
    &\leq \int_{\Omega}\sup_{k} \left|A(x, y;k)\right|\, dy\, \|f\|_S.
\end{align*}
In order to complete the proof we show that the first integral goes to zero as $R\rightarrow\infty$.

Let $A^s$ be given with the extra factor $\alpha(s|x-w|)\alpha(s|w-y|)$ in the integrand, where $\alpha \in C^{\infty}$ is $1$ outside a neighborhood of the origin and $0$ inside a smaller neighborhood of it.

Since,
$$\int_{B_1(0)}\int_{B_1(0)} \frac{1}{|w|^2}\frac{1}{|w-y|^2}\, dw\,dy,$$
it holds that for any $\epsilon > 0$ there exists an $s>0$ such that:
\begin{align*}
    \int_{\Omega}|A-A^s|\, dy < \epsilon.
\end{align*}

Further, we denote $A^{s_0, n}$ the function $A^{s_0}$ with potentials $q_1, \,q_2$ replaced by their $L^1$ smooth approximation $Q_1^n,\, Q_2^n \in C^{\infty}$. Since the other factors are bounded it holds
$$\int_{\Omega} |A^{s_0} - A^{s_0, n}|\, dy < \epsilon.$$

Now it is enough to show that $A^{s_0, n}\rightarrow 0$ as $|k|\rightarrow 0$ uniformly!

All integrands inside of it will be in $C^{\infty}$ and uniformly bounded, thus by Riemann-Lebesgue the result follows.\hfill $\qedsymbol$
\\

\textbf{Proof of Lemma 3.2.}
Once again recall that $\zeta = \left(k^\perp + i\frac{k}{2}\right)$ for $k\in\mathbb{R}^3$.
First we show that $M^1 1\in S$.
We have
\begin{align*}
    M^1 1= \int_{\Omega}\int_{\Omega} e^{-iw\cdot k}\frac{\overline{x-w}}{|x-w|^3}\frac{w-y}{|w-y|^3}e^{iy\cdot k}q_2(y)q_1(w)\, dy\,dw, 
\end{align*}
and applying the $L^p$ norm in $k$ followed with Minkowski integral inequality we obtain
\begin{align*}
    \left[\int_{|k|>R} |M^1 1|^p dk\right]^{1/p} \leq \int_{\Omega} \frac{|q_1(w)|}{|x-w|^2}\left[\int_{|k|>R} \left|\int_{\Omega} e^{iy\cdot k} \frac{w-y}{|w-y|^3}q_2(y) dy\right|^pdk \right]^{1/p}dw
\end{align*}
The inner most integral resembles a Fourier transform, hence, applying the Hausdorff-Young inequality for $p>2$ we have
\begin{align*}
    \left[\int_{|k|>R} \left|\int_{\Omega} e^{iy\cdot k} \frac{w-y}{|w-y|^3}q_2(y)\, dy\right|^p\,dk \right]^{1/p} \leq \left[\int_{\Omega} \frac{|q_2(y)|^{p'}}{|w-y|^{2p'}}\, dy\right]^{1/p'} < C\|q_2\|_{\infty},
\end{align*}
where the last inequality follows quickly by Young's convolution inequality and Riesz type estimate of the kernel.

Therefore, by the same Riesz type estimate it holds:
\begin{align*}
    \left[\int_{|k|>R} |M^1 1|^p\, dk\right]^{1/p} \leq C\|q_2\|_{\infty} \int_{\Omega} \frac{|q_1(w)|}{|x-w|^2}\, dw \leq C' \|q_2\|_{\infty}\|q_1\|_{\infty}.
\end{align*}

To complete the proof we need to show statement (\ref{T_in_S}). Similarly, to the above proof, we have by Hausdorff-Young Inequality, Young's convolution inequality and a Riesz type estimate the following:
\begin{align*}
    \left[\int_{|k|>R} \left|\int_{\mathbb{R}^3} e^{i y\cdot k}\frac{x-y}{|x-y|^3}q_2(y)\, d\sigma_y \right|^p\, d\sigma_k \right]^{1/p} &\leq \left[\int_{\mathbb{R}^3} \left|\frac{x-y}{|x-y|^3}q_2(y)\right|^{p'}\, d\sigma_y \right]^{1/p'} \\ &\leq C\|q_2\|_{\infty}
\end{align*}
\hfill \qedsymbol
\\

We need the following auxiliary result for the proof of Proposition \ref{poincare_lemma_type}.
\begin{lemma}\label{average_lemma}
	Let $\Omega$ be a bounded Lipschitz domain in $\mathbb{R}^3$. 
	
	If $h$ is a scalar-valued and harmonic function that fulfills $$\textnormal{Vec}(S_{\partial\Omega}h)=0,$$ then $\left.h\right|_{\partial\Omega}$ is constant.
\end{lemma}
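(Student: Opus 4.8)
The plan is to exploit the Plemelj--Sokhotzki formula and the corollary that $P_{\partial\Omega}=\tfrac12(I+S_{\partial\Omega})$ is the projection onto boundary values of functions monogenic in $\Omega$. First I would observe that since $h$ is scalar-valued and harmonic in $\Omega$, the quaternionic gradient $\bar D h$ is monogenic in $\Omega$ (because $D\bar D h=\Delta h=0$); equivalently, using $\bar D$ in place of $D$, one builds a monogenic function associated with $h$. The hypothesis $\textnormal{Vec}(S_{\partial\Omega}h)=0$ must then be translated into a statement that some monogenic extension degenerates. Concretely, the scalar part of $S_{\partial\Omega}h$ is $h$ itself up to the projection, so $\textnormal{Vec}(S_{\partial\Omega}h)=0$ says $S_{\partial\Omega}h$ is scalar-valued on $\partial\Omega$, hence $P_{\partial\Omega}h=\tfrac12(h+S_{\partial\Omega}h)$ is scalar-valued; that is, the monogenic extension $H$ of the boundary projection of $h$ has vanishing vector part on $\partial\Omega$.

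Next I would use the structure of monogenic functions: if $H=H_0+\textnormal{Vec}\,H$ is monogenic in $\Omega$ and $\textnormal{Vec}\,H$ vanishes on $\partial\Omega$, then $DH=0$ reads $\partial_0 H_0 = -(e_1\partial_1+e_2\partial_2)(\textnormal{Vec}\,H)+\dots$ — more cleanly, a scalar-valued monogenic function must be constant (its Cauchy--Riemann system forces all partials to vanish), and $\textnormal{Vec}\,H$ monogenic with zero boundary trace is harmonic componentwise with zero Dirichlet data, hence $\textnormal{Vec}\,H\equiv 0$ in $\Omega$. Therefore $H\equiv H_0$ is a scalar monogenic function, so $H$ is constant in $\Omega$, so $\left.H\right|_{\partial\Omega}$ is constant. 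It remains to relate $\left.H\right|_{\partial\Omega}=P_{\partial\Omega}h$ back to $h$: since $h$ is harmonic but not a priori monogenic, $P_{\partial\Omega}h\neq h$ in general. I would close this gap by applying the Borel--Pompeiu / projection decomposition to the monogenic companion $\bar D h$ (which is genuinely monogenic), and track that $\textnormal{Vec}(S_{\partial\Omega}h)=0$ forces the harmonic conjugate directions of $h$ to be locally constant, so that $\nabla h$ has no tangential variation on $\partial\Omega$ beyond a gradient of a constant — concluding $\left.h\right|_{\partial\Omega}$ is constant.

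The main obstacle I anticipate is precisely this last identification: unlike the two-dimensional Cauchy integral, the quaternionic $S_{\partial\Omega}$ mixes a harmonic scalar $h$ with the singular-integral transforms of its "conjugate" components, and the interplay between "harmonic" (the hypothesis on $h$) and "monogenic" (the natural class for $P_{\partial\Omega}$) is not a clean equivalence in $\mathbb{R}^3$. I would handle it by writing $h=\textnormal{Sc}\,F$ for a suitable monogenic $F$ on a neighborhood (available since $h$ is harmonic on a Lipschitz domain, using $F=\bar D\,Th+\text{monogenic}$, or directly $F = h + \textnormal{Vec}$-part determined up to monogenic constants), applying the Plemelj formula to $F$, and reading off that the vanishing of $\textnormal{Vec}(S_{\partial\Omega}h)$ is equivalent to $\textnormal{Vec}\,F$ being constant on $\partial\Omega$; then $F$ is a scalar-plus-constant monogenic function, hence constant, hence $\left.h\right|_{\partial\Omega}=\textnormal{Sc}\,F|_{\partial\Omega}$ is constant. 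The technical care needed is in justifying the existence of such an $F$ with controlled boundary regularity on a merely Lipschitz domain, for which I would invoke the mapping properties of $T$ and $F_{\partial\Omega}$ recorded in Section 2.
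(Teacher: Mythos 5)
The first half of your argument is essentially the paper's proof: you note that $P_{\partial\Omega}h=\tfrac12(h+S_{\partial\Omega}h)$ is the boundary value of a monogenic extension $H$, that the hypothesis $\textnormal{Vec}(S_{\partial\Omega}h)=0$ makes this trace scalar-valued, that $\textnormal{Vec}\,H$ is componentwise harmonic with zero Dirichlet data and hence vanishes in $\Omega$, and that a scalar monogenic function is constant, so $P_{\partial\Omega}h$ is constant on $\partial\Omega$. Up to this point you coincide with the paper (modulo a slip where you momentarily call $\textnormal{Vec}\,H$ ``monogenic''; your actual argument uses only its harmonicity, which is what the paper uses too).

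The genuine gap is the step you yourself flag and then do not close: passing from $h+\textnormal{Sc}(S_{\partial\Omega}h)=\text{const}$ on $\partial\Omega$ to $h|_{\partial\Omega}=\text{const}$. Neither of your two proposed closings works as stated. The claim that $\textnormal{Vec}(S_{\partial\Omega}h)=0$ forces ``$\nabla h$ to have no tangential variation on $\partial\Omega$'' is asserted, not derived, and is exactly the content that needs proof. The alternative route via a monogenic completion $F$ with $\textnormal{Sc}\,F=h$ has two unproved ingredients: (i) the existence of such a global $F$ with controlled boundary regularity on a Lipschitz domain is precisely the Poincar\'e-type statement the paper says is unavailable in this quaternionic framework, and (ii) the asserted equivalence between $\textnormal{Vec}(S_{\partial\Omega}h)=0$ and $\textnormal{Vec}\,F$ being constant on $\partial\Omega$ does not follow from the Plemelj formula applied to $F$: writing $\textnormal{tr}\,F=h+\textnormal{Vec}\,F$ and using $P_{\partial\Omega}(\textnormal{tr}\,F)=\textnormal{tr}\,F$ gives $\textnormal{Vec}(S_{\partial\Omega}h)=\textnormal{Vec}\,F-\textnormal{Vec}\bigl(S_{\partial\Omega}(\textnormal{Vec}\,F)\bigr)$, and since $S_{\partial\Omega}$ applied to a vector-valued function has both scalar and vector parts, the vanishing of the left-hand side does not reduce to constancy of $\textnormal{Vec}\,F$. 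The paper closes the argument differently and much more directly: it invokes the fact that $\textnormal{Sc}(S_{\partial\Omega}\cdot)$ acting on scalar functions is an averaging operator, so that the identity $h+\textnormal{Sc}(S_{\partial\Omega}h)=c$ on $\partial\Omega$ already forces $h$ to be constant there. To repair your write-up you should either supply that averaging property (with reference to the mapping results for $S_{\partial\Omega}$ in Gürlebeck--Sprössig) or find another argument for this final implication; as it stands the conclusion is not reached.
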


\begin{proof}
	First, note that $I+S_{\partial\Omega}=P_{\partial\Omega}$ is a projector and by Proposition 2.5.12 and Corollary 2.5.15 of \cite{gurlebeck} it holds that $P_{\partial\Omega}h$ is the boundary value of a monogenic function in $\Omega$.
	
	Since $h$ is a scalar-valued function it holds that
	\begin{align*}
		P_{\partial\Omega}h&= \text{Sc}(P_{\partial\Omega}h) + \text{Vec}(P_{\partial\Omega}h) \\
		&= (h+\text{Sc}_{\partial\Omega}h) + \text{Vec}(S_{\partial\Omega} h).
	\end{align*}

	Let $w=(h+\text{Sc}_{\partial\Omega}h)$ and $v=\text{Vec}(S_{\partial\Omega}h)$. Now, we denote $f$ as the monogenic extension of $P_{\partial\Omega}h$ in $\Omega$, as such, the boundary values of $f$ fulfill $\text{tr}f=w+v$. Note that by hypothesis we have that $\left.v\right|_{\partial\Omega}=0$. 
	
	Hence, $f$ is also an harmonic function, which implies that the scalar and vector components are harmonic. 
	\begin{align*}
		\begin{cases}
			\Delta (\text{Vec}f) = 0, \\
		\left.\text{Vec\,}f\right|_{\partial\Omega}=0.
		\end{cases}
	\end{align*}
	
	By a mean value theorem or a maximum principle it holds that $\text{Vec}f=0$. Due to this and $f$ being monogenic we obtain that $Df=0 \Leftrightarrow D(\text{Re}f)=0$. Thus, $\text{Re}f=c$ since $D$ is a quaternionic operator.
	
	Consequently, the boundary values are also constant, which means that $w=c$ in $\partial\Omega$. Since, $\text{Sc}(S_{\partial\Omega}h)$ is an averaging operator it holds that $h=c$.
\end{proof}

\textbf{Proof of Proposition 3.4}

Suppose that $(u, v)$ are solutions to the following equations:
\begin{align*}
    \begin{cases}
    \bar{D}u = \gamma^{-1/2}\phi_1 \\
    D v = \gamma^{-1/2}\phi_2.
    \end{cases}
\end{align*}

From applying the operator $D$ and $\bar{D}$ to the first and second equation respectively, we obtain from $\phi_2=\overline{\phi}_1^{\mathbb{H}}$ and $q_2 = \overline{q}_1^{\mathbb{H}}$ the following:

\begin{align*}
    \Delta u &= D(\gamma^{-1/2}\phi_1) = D(\gamma^{-1/2})\phi_1 + \gamma^{-1/2}D\phi_1 \\
    &= -\frac{1}{2}\gamma^{-3/2}(D\gamma\phi_1) + \gamma^{-1/2}\phi_2 q_1 \\
    &= \gamma^{-1/2}\left[q_2\phi_1 + \phi_2q_1\right] = \gamma^{-1/2}\left[\overline{q}_1^{\mathbb{H}}\phi_1 + \overline{\phi}_1^{\mathbb{H}}q_1\right]\\
    &= \gamma^{-1/2}\text{Sc } (\overline{\phi}_1^{\mathbb{H}}q_1).
\end{align*}
and 
\begin{align*}
    \Delta v &= \bar{D}(\gamma^{-1/2}\phi_2) = \bar{D}(\gamma^{-1/2})\phi_2 + \gamma^{-1/2}\bar{D}\phi_2 \\
    &= -\frac{1}{2}\gamma^{-3/2}(\bar{D}\gamma)\phi_2 + \gamma^{-1/2}\phi_1 q_2 \\
    &= \gamma^{-1/2}\left[q_1\phi_2 + \phi_1q_2\right] = \gamma^{-1/2}\left[q_1\overline{\phi}_1^{\mathbb{H}} + \phi_1\overline{q}_1^{\mathbb{H}}\right]\\
    &= \gamma^{-1/2}\text{Sc } (\phi_1\overline{q}_1^{\mathbb{H}}).
\end{align*}

The first thing to notice is that both equations imply that $u$ and  $v$ must be scalar-valued functions.

Further, notice that
\begin{align*}
    \Delta(u-v) &= \gamma^{-1/2}\left[\text{Sc } (\overline{\phi}_1^{\mathbb{H}}q_1) - \text{Sc } (\phi_1\overline{q}_1^{\mathbb{H}})\right]\\ &= \gamma^{-1/2}\left[\text{Sc } (\overline{\phi}_1^{\mathbb{H}}q_1) - \text{Sc } (\overline{q_1\overline{\phi}_1^{\mathbb{H}}})\right] = 0.
\end{align*}

Therefore, $h=u-v$ is an harmonic function. Our objective is to show that $h\equiv 0$, thus showing that $u=v$.

For such, let us consider the theory of integral transforms in quaternionic analysis. We have
\begin{align*}
    u &= \bar{T}(\gamma^{-1/2}\phi_1) + \overline{F}_{\partial\Omega}(\gamma^{-1/2}\phi_1) \text{ and } \\
    u &=  \bar{T}(\gamma^{-1/2}\phi_1) + \overline{F}_{\partial\Omega}(u),
\end{align*}

which implies that $$\overline{F}_{\partial\Omega}(\gamma^{-1/2}\phi_1) = \overline{F}_{\partial\Omega}u.$$

Analogously, we obtain $$F_{\partial\Omega} (\gamma^{-1/2} \phi_2) = F_{\partial\Omega} v.$$

Here, we can extrapolate from the first equation and from $u$ being scalar-valued that \begin{align*}
    \overline{\gamma^{-1/2}\phi_1}F_{\partial\Omega} = F_{\partial\Omega} u \\
    \Leftrightarrow \gamma^{-1/2}\phi_2 F_{\partial\Omega} = F_{\partial\Omega} u.
\end{align*}

Applying the operator $F_{\partial\Omega}$ on the other side, we obtain:
\begin{align*}
    F_{\partial\Omega}^2u &= F_{\partial\Omega}(\gamma^{-1/2}\phi_2)F_{\partial\Omega} \text{ and }\\
    F_{\partial\Omega}^2v &= F_{\partial\Omega}(\gamma^{-1/2}\phi_2)F_{\partial\Omega} \\
    &\Rightarrow F_{\partial\Omega}^2 h = F_{\partial\Omega}^2 (u-v) = 0.
\end{align*}

If we take the trace on both sides, the operator becomes a projector thus we obtain $\text{tr }F_{\partial\Omega}h=0$.

Now, through the Sokhotski-Plemelj formula we obtain:
\begin{align*}
    \text{tr } F_{\partial\Omega} h = \left.h\right|_{\partial\Omega} + S_{\partial\Omega}h = 0, \text{ at } \partial\Omega.
\end{align*}

Since $h$ is a scalar-valued function that we decompose this formulation with the scalar and vector part to obtain two conditions:
\begin{align*}
    \begin{cases}
    &h + \text{Sc} (S_{\partial\Omega} h) = 0 \\
    &\text{Vec}(S_{\partial\Omega} h) = 0.
    \end{cases}
\end{align*}

Through the second condition and Lemma \ref{average_lemma} we have that $h$ is constant over $\partial\Omega$.

Now, given that $h$ is a scalar constant, the first condition reduces to:
\begin{align*}
    h(1+\text{Sc}(S_{\partial\Omega} 1))=0
\end{align*}

By \cite{gurlebeck} we obtain that $1+\text{Sc}(S_{\partial\Omega}1) = 1/2$ in $\partial\Omega$. Therefore, we conclude that $h\equiv 0$ in $\partial\Omega$. Given that $h$ is harmonic, this immediately implies that $h=0$ in $\Omega$.

Therefore, we obtain $u=v$, and therefore there exists a unique solution to the initial system through the $T$ and $F_{\partial\Omega}$ operators in $\Omega$.

To finalize, we only need to show that $u$ fulfills the conductivity equation in $\Omega$.

Bringing the first equation to light
$$\bar{D}u=\gamma^{-1/2}\phi_1,$$
changing the side of the conductivity we get $\gamma^{1/2}\bar{D}u = \phi_1$ and applying the $D$ operator to both sides now brings
\begin{align*}
    \quad &D\left(\gamma^{1/2}\bar{D}u\right)=D\phi_1 \\
    \Leftrightarrow \quad &D\left(\gamma^{1/2}\right)\bar{D}u + \gamma^{1/2}\Delta u = \phi_2 q_1 \\
    \Leftrightarrow \quad
    &D\left(\gamma^{1/2}\right)\bar{D}u + \gamma^{1/2}\Delta u = \gamma^{-1/2}Du \frac{1}{2}\frac{\bar{D}\gamma}{\gamma} \\
    \Leftrightarrow \quad &\frac{1}{2}\gamma^{1/2}D\gamma\bar{D}u + \gamma^{1/2}\Delta u + \frac{1}{2}Du\frac{\bar{D}\gamma}{\gamma^{1/2}}=0 \\
    \Leftrightarrow\quad &\nabla\gamma\cdot\nabla u + \gamma\Delta u = 0 \Leftrightarrow \nabla\cdot\left(\gamma\nabla u\right) = 0
\end{align*}
\hfill\qedsymbol\\

As such, we conclude our proof of uniqueness for complex-conductivities in $W^{1,\infty}(\Omega)$ from the Dirichlet-to-Neumann map $\Lambda_{\gamma}$. Notice that (\ref{potential_scattering}) even provides a reconstruction formula, but as mentioned in the previous section it is very unstable for computational purposes.

\end{document}